\def\dim{\operatorname{dim}}
\def\ord{\operatorname{ord}}
\def\Ker{\operatorname{Ker}}
\def\CH{\operatorname{CH}}
\def\Hom{\operatorname{Hom}}
\def\Ext{\operatorname{Ext}}
\def\cl{\operatorname{cl}}
\def\hom{\mathrm{hom}}
\def\alg{\mathrm{alg}}
\def\sgn{\mathrm{sgn}}
\def\Tr{\mathrm{Tr}}
\def\holo{\mathrm{holo}}
\def\C{\mathbb{C}}\def\Q{\mathbb{Q}}\def\R{\mathbb{R}}\def\Z{\mathbb{Z}}
\def\fa{\mathfrak{a}}
\def\ol#1{\overline{#1}}\def\wt#1{\widetilde{#1}}
\def\os#1#2{\overset{#1}{#2}}\def\us#1#2{\underset{#1}{#2}}
\def\ot{\otimes}
\def\ra{\rightarrow}
\def\lra{\longrightarrow}
\def\a{\alpha}\def\b{\beta}\def\g{\gamma}\def\d{\delta}\def\pd{\partial}
\def\io{\iota}\def\k{\kappa}\def\z{\zeta}\def\vphi{\varphi}
\def\o{\omega}
\def\s{\sigma}
\def\vD{\varDelta}
\def\vG{\varGamma}
\def\sO{\mathscr{O}}
\def\bI{\mathbf{I}}
\def\dlog#1{\frac{d#1}{#1}}
\def\angle#1{\langle #1 \rangle}
\def\inner#1#2{\langle#1,#2\rangle}
\def\F#1#2#3#4#5{\,_3F_2\left({{#1,#2,#3} \atop {#4,#5}} ;1\right)}
\def\Fx#1#2#3#4#5{\,_3F_2\left({{#1,#2,#3} \atop {#4,#5}} ;x\right)}
\def\G#1#2{\varGamma\left({{#1} \atop {#2}}\right)}
\theoremstyle{plain}
    \newtheorem{theorem}{Theorem}[section]
    \newtheorem{proposition}[theorem]{Proposition}
    \newtheorem{lemma}[theorem]{Lemma}
    \newtheorem{corollary}[theorem]{Corollary}
\theoremstyle{definition}
    \newtheorem{remark}[theorem]{Remark}
    \newtheorem{example}[theorem]{Example}
    \newtheorem{assumption}[theorem]{Assumption}
\numberwithin{equation}{section}
\begin{document}
\title{On the Abel-Jacobi maps of Fermat Jacobians}
\author{Noriyuki Otsubo}
\address{Department of Mathematics and Informatics, Chiba University, Yayoicho 1-33, Inage, Chiba, 263-8522 Japan} 
\email{otsubo@math.s.chiba-u.ac.jp}
\date{March 1, 2010.}
\begin{abstract}
We study the Abel-Jacobi image of the Ceresa cycle $W_k-W_k^-$, where $W_k$ is the image of the $k$-th symmetric product of a curve $X$ on its Jacobian variety. For the Fermat curve of degree $N$, we express it in terms of special values of generalized hypergeometric functions and give a criterion for the non-vanishing of $W_k-W_k^-$ modulo algebraic equivalence, which is verified numerically for some $N$ and $k$.  
\end{abstract}
\keywords{Algebraic cycle, Iterated integral, Hypergeometric function}
\subjclass[2000]{14C25, 33C20, 33C65}
\maketitle
%
\section{Introduction}

Let $A$ be a smooth projective variety of dimension $g$ over $\C$. 
The Chow group $\CH_k(A)$ of algebraic cycles of dimension $k$ on $A$  modulo rational equivalence has the subgroups
$$\CH_k(A) \supset \CH_k(A)_\hom \supset \CH_k(A)_\alg$$
of homologically and algebraically trivial cycles, respectively. 
Consider the Abel-Jacobi map
$$\Phi_k \colon \CH_k(A)_\hom \ra J_k(A)$$
to the Griffiths intermediate Jacobian, which is a complex torus associated to the Hodge structure 
$H_{2k+1}(A)(-k)$. 
Since the the image of an algebraically trivial cycle has a certain Hodge-theoretic property, 
the Abel-Jacobi map gives us a criterion for a homologically trivial cycle not to be 
algebraically trivial, in other words, non-trivial in the Griffiths group $\CH_k(A)_\hom/\CH_k(A)_\alg$. 

Let $X$ be a smooth projective curve of genus $g \geq 3$ and $A$ be its Jacobian variety. 
By choosing a base point, $X$ is embedded into $A$ and the $k$-th symmetric product of $X$ 
defines an algebraic cycle of dimension $k$ on $A$, which is historically denoted by $W_k$. 
Since the inversion of $A$ acts trivially on the homology groups of even degree, 
the cycle $W_k-W_k^-$ is homologically trivial. This is called the Ceresa cycle and 
Ceresa  \cite{ceresa} showed that
$W_k-W_k^-$ is not algebraically trivial for a generic curve $X$ and $1\leq k \leq g-2$. 
Note that $\Phi_{g-1}$ is an isomorphism by the Abel-Jacobi theorem. 

Harris \cite{harris-1} defined an invariant called the pointed harmonic volume using Chen's iterated integrals, and showed that it calculates $\Phi_1(W_1-W_1^-)$. 
It is not easy, however, to compute the harmonic volume of a given curve. 
Harris \cite{harris-2} calculated numerically the harmonic volume of the Fermat quartic curve  
and showed that $W_1-W_1^-$ is not algebraically trivial. 
In this case, Bloch \cite{bloch} showed by an $\ell$-adic method that $W_1-W_1^-$ is even non-torsion modulo algebraical equivalence. 
For $k>1$, Faucette introduced the higher harmonic volume \cite{faucette-2} and used it to prove that $W_2-W_2^-$ is not algebraically trivial for a certain finite covering of the Fermat quartic curve \cite{faucette-1}. 

Rather recently, Tadokoro proved similar results as Harris' for the Klein quartic curve \cite{tadokoro}, which is a quotient of the Fermat septic curve, and for the Fermat sextic curve \cite{tadokoro-2}. 
Moreover, he showed that the iterated integrals on the Fermat curve are expressed by special values of Barnes'  generalized hypergeometric function ${}_3F_2$. 

In this paper, we generalize the above results of Harris and Tadokoro and calculate the harmonic volume of the Fermat curve of arbitrary degree. 
Moreover, we reduce the study of $\Phi_k(W_k-W_k^-)$ for any $k$ to the case $k=1$, and hence to the calculation of the harmonic volume.  
As a result, we shall prove: 

\begin{theorem}\label{main-theorem} 
Let $X$ be the Fermat curve of degree $N \geq 4$, and $k$ be an integer with 
$1 \leq k \leq g-2$ where $g=(N-1)(N-2)/2$. 
\begin{enumerate}
\item 
If 
$$ 
\frac{\vG\bigl(1-\tfrac{h}{N}\bigr)^4}{\vG\bigl(1-\tfrac{2h}{N}\bigr)^2} \  
\F{\tfrac{h}{N}}{\tfrac{h}{N}}{1-\tfrac{2h}{N}}{1}{1}$$
is not an element of $\Q(\mu_N)$
for some integer $h$ with $0<h<N/2$, $(h,N)=1$, then  
$W_k-W_k^-$ is non-torsion modulo algebraic equivalence.  
\item 
If 
$$k!  \cdot 2N^{2k} \sum_{0<h<N/2 \atop   (h,N)=1} \frac{\vG\bigl(1-\tfrac{h}{N}\bigr)^4}{\vG\bigl(1-\tfrac{2h}{N}\bigr)^2} \ 
\F{\tfrac{h}{N}}{\tfrac{h}{N}}{1-\tfrac{2h}{N}}{1}{1}$$
is not a rational integer, then 
$k! (W_k-W_k^-)$ is non-trivial modulo algebraic equivalence. 
This is indeed the case for $N \leq 1000$, $k=1$, or $N \leq 8$ and any $k$. 
\end{enumerate}
\end{theorem}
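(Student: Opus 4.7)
The plan is to prove the theorem by (a) reducing the general-$k$ statement to the $k=1$ case, (b) computing $\Phi_1(W_1-W_1^-)$ explicitly for the Fermat curve via Harris's pointed harmonic volume and Tadokoro's hypergeometric formula for iterated integrals, and (c) translating the resulting explicit formula into a criterion for non-triviality in the Griffiths group.

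\textbf{Reduction to $k=1$.} The Pontryagin product structure on the Jacobian gives $k! \, W_k = W_1^{*k}$, so $k!(W_k - W_k^-)$ is an alternating combination of Pontryagin powers of $W_1$ and $W_1^-$. Pulling this identity through the Abel-Jacobi map, using the compatibility of $\Phi_\ast$ with the group law on $A$, I would express $k! \, \Phi_k(W_k - W_k^-)$ modulo $\Phi_k(\CH_k(A)_\alg)$ in terms of $\Phi_1(W_1 - W_1^-)$, with an explicit combinatorial factor. This accounts for both the factor $k!$ appearing in part (ii) and the matching denominator produced by the reduction.

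\textbf{Explicit computation on the Fermat curve.} The action of $\mu_N \times \mu_N$ on $X$ decomposes $H^0(\Omega_X^1)$ into one-dimensional eigenspaces indexed by triples $(a,b,c)$ with $a+b+c \equiv 0 \pmod{N}$ and $0 < a,b,c < N$. Harris's pointed harmonic volume is an iterated-integral pairing on such forms, and each relevant integral evaluates, by Tadokoro's Jacobi-sum and Beta-integral framework, to a product of $\vG$-values times a special value of ${}_3F_2$ at $1$. The involution $W_1 \mapsto W_1^-$ together with the pairing against the anti-holomorphic part restrict the surviving contributions to triples of the form $(h, h, N-2h)$ with $(h,N)=1$, producing the sum written in the theorem.

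\textbf{Non-vanishing criteria.} The image of $\CH_k(A)_\alg$ under $\Phi_k$ lies in a sub-abelian variety $J_k(A)^\alg \subset J_k(A)$ defined over the field of definition of $A$, which for the Fermat Jacobian is $\Q(\mu_N)$. For part (i), dividing the Abel-Jacobi invariant by the CM-periods (precisely the $\vG$-factors) converts it into an algebraic number whenever the cycle is torsion modulo algebraic equivalence, and Galois-equivariance of the CM structure forces that number to lie in $\Q(\mu_N)$; hence the stated hypothesis rules out a torsion class. For part (ii), summing over $h$ under the action of $(\Z/N)^\times$ produces a single Galois-invariant quantity that must be a rational integer (after clearing the denominator $k!$ and the period normalization $2N^{2k}$) whenever $k!(W_k - W_k^-)$ is trivial modulo algebraic equivalence, so non-integrality of the displayed sum proves non-triviality. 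The numerical verification amounts to computing the relevant ${}_3F_2$-values to sufficient precision and checking the distance from $\Z$.

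\textbf{Main obstacle.} The hardest step is the reduction: transferring $k! \, W_k = W_1^{*k}$ through $\Phi_k$ is delicate because the Pontryagin product interacts non-trivially with the Hodge filtration on $H_{2k+1}(A)$. Identifying the correct combinatorial factor and showing that the error term lies in $J_k(A)^\alg$ requires a careful comparison of iterated integrals on $A$ with respect to the group law. Once this is in place, the remaining steps reduce to a systematic use of Tadokoro's hypergeometric identities and standard rationality arguments for CM abelian varieties.
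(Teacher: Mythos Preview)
Your overall architecture (Pontryagin reduction to $k=1$, harmonic volume, Tadokoro-type hypergeometric formula) matches the paper. But the mechanism you propose for the non-vanishing criteria is not correct, and this is where the actual content lies.

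\textbf{The Hodge-type argument is missing.} The paper does not use any sub-abelian variety ``$J_k(A)^{\alg}$ defined over $\Q(\mu_N)$'', nor any CM/Galois rationality of periods. The point is much more elementary: as recalled in \S2.1, if $Z$ is algebraically trivial then $\Phi_k(Z)$, viewed as a functional on $\wedge^{2k+1}H^1_\Z$, vanishes on every class lying in $F^{k+2}+\overline{F^{k+2}}$. So the entire game is to \emph{choose} a single test element $\vphi=\vphi^{a_1,b_1}\wedge\cdots\wedge\vphi^{a_{2k+1},b_{2k+1}}$ of Hodge type $(k+2,k-1)+(k-1,k+2)$ (Proposition~4.8) and evaluate $\Phi_k(W_k-W_k^-)$ there. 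For that specific $\vphi$, the value depends only on the class modulo algebraic equivalence; torsion of the class forces the value to be torsion in $(\sO\otimes\R)/\sO$, hence to lie in $K/\sO$, and Proposition~4.4 shows this value is (a nonzero element of $\Q(\mu_N)$)$\times\int_\d\o^{ha_1,hb_1}\o^{ha_2,hb_2}$ plus an element of $\Z[\mu_N]$. That is why membership of the hypergeometric expression in $\Q(\mu_N)$ is exactly the obstruction, with no appeal to CM theory. Your proposed argument (``dividing by the $\vG$-factors gives an algebraic number, and Galois-equivariance forces it into $\Q(\mu_N)$'') is both unnecessary and, as stated, unjustified.

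\textbf{The sum over $h$ is a trace, not a residue of ``surviving contributions.''} You describe the sum over $h$ as arising because other triples are killed by the involution $W_1\mapsto W_1^-$. That is not what happens. The paper picks \emph{one} triple, namely $(a_1,b_1)=(1,-2)$, $(a_2,b_2)=(-2,1)$, $(a_3,b_3)=(1,1)$ (Example~4.9), checked to satisfy the Hodge-type condition of Proposition~4.8. The sum over $h$ with $0<h<N/2$, $(h,N)=1$ is the trace $\Tr:(\sO\otimes\R)/\sO\to\R/\Z$ over the embeddings $\s:K\hookrightarrow\C$ (Theorem~4.6), restricted to those $\s$ for which the forms are holomorphic; the complex-conjugate embeddings contribute the same real value. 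The factor $2$ is $\Phi_1(W_1-W_1^-)=2m(x)$ from Theorem~2.2, and the $N^{2k}$ comes from $\langle\vphi^{a,b},\vphi^{-a,-b}\rangle=N^2\cdot(\text{unit})$ in the $k-1$ auxiliary pairs together with one more $N^2$ from the harmonic-volume formula. None of this involves an ``error term in $J_k(A)^{\alg}$''; the reduction in \S3 is an exact identity in $J_k(A)$, not merely modulo the algebraic part.
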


Our key ingredients are as follows. 
First, we extend the coefficients to $\sO=\Z[\mu_N]$, so that the cohomology $H^1(X,\Z) \ot \sO$ has 
$2g$ linearly independent elements $\{\vphi^{a,b}\}$ which are eigenvectors with respect to a group action.  
The harmonic volume is a functional on a subgroup $K_\Z \ot H^1_\Z \subset \ot^3 H^1(X,\Z)$ with values in $\R/\Z$. 
We evaluate it at $\vphi^{a_1,b_1} \ot \vphi^{a_2,b_2} \ot \vphi^{a_2,b_3}$ satisfying certain assumption. 
Then, it takes the value in $(\sO \ot \R)/\sO$, which is isomorphic to the direct sum of copies of $\C$ for the  infinite places of $\Q(\mu_N)$. 
The harmonic volume is defined by iterated integrals of length $\leq 2$, and the iterated integral of $\vphi^{a,b}$ coincides with an integral representation of the value at $1$ of Barnes' generalized hypergeometric function ${}_3F_2$. 
This value is also written as a special value of Appell's generalized hypergeometric function $F_3$ of two variables. 

The intermediate Jacobian $J_k(A)$ is regarded as a functional on $\wedge^{2k+1}H^1(X,\Z)$ with values in $\R/\Z$. The Abel-Jacobi image $\Phi_k(W_k-W_k^-)$, after multiplied by $k!$,  
is written as the Pontryagin product of $\Phi_1(W_1-W_1^-)$ and a cycle class, so we are reduced to the case $k=1$. 
We shall take up a special element $\vphi^{a_1,b_1} \wedge \cdots \wedge \vphi^{a_{2k+1},b_{2k+1}}$ to 
evaluate $k! \cdot \Phi_k(W_k-W_k^-)$. It takes the value in $(\sO \ot \R)/\sO$ and we obtain 
Theorem \ref{main-theorem} (i). By taking the trace to $\R/\Z$, we obtain (ii), which can be computed  numerically. Since the above element has the Hodge type $(k+2,k-1) + (k-1,k+2)$, the non-vanishing implies that $W_k-W_k^-$ is not algebraically trivial. 

Finally, let us mention a connection to number theory. 
If a curve $X$ is defined over a number field, then so is its Jacobian $A$ and $\CH_k(A)$ is conjectured to be a finitely generated abelian group. 
The conjecture of Swinnerton-Dyer \cite{s-d} (generalizing the Birch-Swinnerton-Dyer conjecture) states  that 
$$\mathrm{rank}\CH_k(A)_\hom = \ord_{s=k+1} L(h^{2k+1}(A),s). $$
Here, $L(h^{i}(A),s)$ is the $L$-function associated to the $i$-th cohomological motive of $A$, 
which conjecturally is continued analytically to the whole complex plane and satisfies a functional equation with respect to $s \leftrightarrow 2k+2-s $. 
If $X$ is a Fermat curve, then by Weil \cite{weil-jacobi}, 
$L(h^{2k+1}(A),s)=L(\wedge^{2k+1} h^1(X),s)$ is the product of the $L$-functions 
of Jacobi-sum Hecke characters of cyclotomic fields. 
Therefore, the non-torsionness of $\Phi_k(W_k-W_k^-)$ at an element of $\wedge^{2k+1} H^1(X,\Z) \ot \sO$ 
should be related with the vanishing of the corresponding $L$-function at $s=k+1$, and our result suggests a connection between hypergeometric values and the Jacobi-sum Hecke $L$-functions. 
Such a connection is also found in \cite{otsubo}, where the Beilinson regulator is written in terms of similar hypergeometric values.  

This paper is organized as follows. In \S 2, we recall the Abel-Jacobi map, the pointed harmonic volume,  and Harris' theorem as reworked by Pulte. In \S 3, after introducing the Pontryagin product, we reduce the calculation of the Able-Jacobi image of the Ceresa cycle to the case $k=1$. In \S 4, we calculate the iterated integrals and the harmonic volume of the Fermat curve, and express them as special values of generalized hypergeometric functions. 
Finally, we evaluate the Abel-Jacobi image at a special element and prove Theorem \ref{main-theorem}. 

\medskip
The author would like to thank Tomohide Terasoma for his suggestion to read the works of Harris and Tadokoro. I am also grateful to Seidai Yasuda and Yuuki Tadokoro for their valuable comments.

\section{Abel-Jacobi maps and harmonic volume}

In this section, we first recall the necessary materials on the Griffiths intermediate Jacobians and the Abel-Jacobi maps. 
Then, we introduce iterated integrals and the extension of mixed Hodge structures associated to the fundamental group, which is the most natural way due to Pulte to introduce Harris' pointed harmonic volume. 

\subsection{Abel-Jacobi maps}

Recall \cite{hodge-2} that a {\em Hodge structure} of weight $w$ is a $\Z$-module $H_\Z$ of finite rank equipped with a finite descending filtration $F^\bullet H_\C$ on $H_\C := H_\Z \ot_\Z \C$ such that $H_\C \simeq  \bigoplus_{p+q=w} H^{p,q}$ with $H^{p,q} = F^pH_\C \cap \ol{F^qH_\C}$. 
If $X$ is a smooth projective variety over $\C$, the cohomology group $H^n(X,\Z)$ 
underlies a Hodge structure of weight $n$, which we denote by $H^n(X)$.  
Let $\Z$ denote the Hodge structure of weight $0$ with $H_\Z=\Z$, $H_\C=H^{0,0}$. 
For a Hodge structure $H$ of weight $w$, we denote by $H(r)$ 
the Hodge structure of weight $w-2r$ with the {\em same} $\Z$-lattice $H_\Z$ and the shifted filtration 
$F^pH(r)_\C=F^{p+r}H_\C$. 

If $H$ is a Hodge structure of weight $-1$, then 
$$JH := H_\C/(F^0H_\C + H_\Z)$$
defines a complex torus, and this construction is functorial in $H$. 
In particular, for a smooth projective variety $X$ over $\C$, $H_{2k+1}(X)(-k)$ is a Hodge structure of weight $-1$, 
and the associated complex torus 
$$J_k(X) := JH_{2k+1}(X)(-k)$$
is the $k$-th {\em intermediate Jacobian} of Griffiths. 
Since 
$$H_{2k+1}(X,\C)/F^{-k}H_{2k+1}(X,\C) \simeq (F^{k+1}H^{2k+1}(X,\C))^*$$
where ${}^*$ denotes the $\C$-linear dual, we have an identification
\begin{equation}\label{int-jac-complex}
J_k(X) \simeq (F^{k+1}H^{2k+1}(X,\C))^* / H_{2k+1}(X,\Z). 
\end{equation}
By the Poincar\'e duality, we have also
$$J_k(X) \simeq H^{2d-2k-1}(X,\C)/(F^{d-k}H^{2d-2k-1}(X,\C)+H^{2d-2k-1}(X,\Z))$$
which is the original definition of $J^{d-k}(X)$ by Griffiths \cite{griffiths}. 

For a Hodge structure of weight $-1$, put
$$J_\R H = H_\R / H_\Z. $$
Then the inclusion $H_\R \ra H_\C$ induces an isomorphism 
$$J_\R H \os{\sim}{\ra} JH$$
of real Lie groups. Its inverse is induced by 
$$H_\C = F^0H_\C \oplus \ol{F^0 H_\C} \ra \ol{F^0H_\C} \ra H_\R$$
where the first map is the projection and the second map sends $\a$ to $\a + \ol\a$. 
Therefore, for a Hodge structure $H$ of weight $1$, we have an isomorphism of real tori
$$\Hom(H_\Z, \R/\Z) \simeq J\Hom(H,\Z). $$
In particular, we have
\begin{equation}\label{int-jac-real}
J_k(X) \simeq \Hom(H^{2k+1}(X,\Z), \R/\Z).
\end{equation}

Let $Z$ be an algebraic cycle on $X$ of dimension $k$. Then the cycle class map
$$\cl_k \colon \CH_k(X) \ra H_{2k}(X,\Z)$$
defines an element of 
$$\Hom(\Z,H_{2k}(-k)) = H_{2k}(X,\Z) \cap F^{-k}H_{2k}(X,\C)$$
where $\Hom$ means the homomorphisms of Hodge structures.   
Let $\CH_k(X)_\hom$ denote the kernel of $\cl_k$. 
If $\cl_k(Z)=0$, then there exists a topological $(2k+1)$-chain $W$ with $\pd W=Z$. 
The functional $\eta \mapsto \int_W \eta$ on $F^{k+1}H^{2k+1}(X,\C)$ is well-defined up to periods. 
By the identification \eqref{int-jac-complex},  the {\em Abel-Jacobi map} of dimension $k$ 
$$\Phi_k \colon \CH_k(X)_\hom \ra J_k(X)$$ 
is defined. 
If $k=\dim(X)-1$, $J_k(X)$ is nothing but the Picard variety of $X$ and $\Phi_k$ is an isomorphism by the classical Abel-Jacobi theorem. 

Let $\CH_k(X)_\alg \subset \CH_k(X)_\hom$ be the subgroup of algebraically trivial cycles, i.e. those cycles obtained as $p_*(\vG \cdot q^*D)$ where $D$ is a divisor of degree $0$ on a curve $C$, 
$\vG$ is an algebraic $(k+1)$-cycle on $X \times C$, and $p\colon X \times C \ra X$, $q\colon X \times C \ra C$ are the projections. 
One sees that if $Z \in \CH_k(X)_\alg$, then $\Phi_k(Z)$ is contained in 
$$F^{-k-1}H_{2k+1}(X,\C) / (F^{-k}H_{2k+1}(X,\C) + H_{2k+1}(X,\Z)).$$ 
Therefore, under the identifications \eqref{int-jac-complex} and  \eqref{int-jac-real}, 
the image $\Phi_k(Z)$ as a functional vanishes on 
$$F^{k+2}H^{2k+1}(X,\C), \quad H^{2k+1}(X,\Z) \cap (F^{k+2}+\ol{F^{k+2}})$$
respectively.  

Recall \cite{hodge-2} that a {\em mixed Hodge structure} is  a $\Z$-module $H_\Z$ of finite rank 
equipped with an increasing weight filtration $W_\bullet H_\Q$ 
and a decreasing Hodge filtration $F^\bullet H_\C$ such that for each $w$, $\mathrm{Gr}_w^W H_\Q$ with the induced filtration $F^\bullet$ is a $\Q$-Hodge structure of weight $w$. 
For mixed Hodge structures $A$, $B$, let 
$\Ext(A,B)$ denote the group of congruence classes of extensions of mixed Hodge structures (see \cite{carlson}),  
i.e. exact sequences 
$$0 \ra B \ra E \ra A \ra 0$$
of mixed Hodge structures with the natural equivalence relation and the Baer sum. 

If $A$, $B$ are Hodge structures of weights $w$, $v$, respectively, 
then $\Hom(A,B)$ is a Hodge structure of weight $v-w$. 
By Carlson \cite{carlson}, when $v-w=-1$,  
we have an isomorphism 
$$\Ext(A,B) \simeq J\Hom(A,B)$$
given as follows. 
For an extension $E$ as above, there is   
a section $s_F \colon A_\C \ra E_\C$ compatible with the Hodge filtrations 
and a retraction 
$r_\Z \colon E_\Z \ra B_\Z$ of $\Z$-modules.  
We associate to $E$ the class of $r_\Z\circ s_F$, which is well-defined. 
If we apply this to 
$$H_{2k+1}(X)(-k) = \Hom(H^{2k+1}(X)(k),\Z),$$ 
we obtain another identification 
\begin{equation*}
J_k(X)  \simeq \Ext(H^{2k+1}(X)(k),\Z).
\end{equation*}

\subsection{Harmonic volume}
Let $X$ be a smooth projective variety over $\C$. 
Recall that, for smooth $1$-forms $\vphi_i$ ($i=1,2$) and a piecewise smooth path $\g$ on $X$, 
Chen's iterated integral (of length $2$) is defined by 
$$\int_\g \vphi_1\vphi_2 = \int_0^1 \left(\int_0^{t_2} f_1(t_1) dt_1 \right) f_2(t_2) dt_2$$
where $\g^*\vphi_i=f_i(t) dt$.  
For paths $\g$, $\g'$ with $\g(1)=\g'(0)$, let $\g\cdot \g'$ denote the composition. Then, we have a formula 
\begin{equation}\label{formula-1}
\int_{\g \cdot \g'} \vphi \vphi' = \int_\g \vphi \vphi' + \int_\g\vphi \int_{\g'} \vphi' + \int_{\g'} \vphi \vphi'.  
\end{equation}
It follows that 
\begin{equation}\label{formula-2}
\int_\g \vphi \vphi' + \int_{\g^{-1}} \vphi \vphi'= \int_\g\vphi \int_{\g} \vphi' ,
\end{equation}
and
\begin{equation}\label{formula-3}
\int_{\a^{-1}\cdot \g \cdot \a}\vphi\vphi' = \int_{\g}\vphi\vphi' - \int_\a \vphi\int_\g\vphi' +\int_\g \vphi \int_\a \vphi' 
\end{equation}
for a loop $\g$ and a path $\a$ with $\g(0)=\a(0)$. 
An iterated integral is {\em closed} if its value at $\g$ depends only on its homotopy class relative to the endpoints. 

Fix a base point $x \in X$ and consider the fundamental group $\pi_1(X,x)$. 
Let $\fa$ be the augmentation ideal of the group ring $\Z\pi_1(X,x)$, i.e. 
the kernel of the degree map
$$\Z\pi_1(X,x) \ra \Z; \quad \sum n_i \g_i \mapsto \sum n_i. $$ 
Then, by Chen's $\pi_1$ de Rham theorem  (\cite{chen-trans}, see also \cite{hain-1}),  
$\Hom(\Z\pi(X,x)/\fa^{s+1}, \R)$ is generated by closed iterated integrals of length $\leq s$. 
Using this, Hain \cite{hain-1} defined a mixed Hodge structure on 
$\Z\pi_1(X,x)/\fa^{s}$ such that the natural map $\Z\pi_1(X,x)/\fa^{s} \ra \Z\pi_1(X,x)/\fa^{t}$ for $s \geq t$ is a morphism of mixed Hodge structures (there is a different approach by Morgan \cite{morgan}). 

Consider the exact sequence of mixed Hodge structures
\begin{equation}\label{extension}
0 \ra \fa^2/\fa^3 \ra  \fa/\fa^3 \ra \fa/\fa^2 \ra 0.
\end{equation}
The map $\pi_1(X,x) \ra \fa/\fa^2; \g \mapsto \g-1$ is well-defined 
and induces an isomorphism
$$H_1(X,\Z) \os{\simeq}{\lra} \fa/\fa^2$$
of Hodge structures of weight $-1$. 
On the other hand, if we put
$$K= \Ker (\cup: \ot^2 H^1(X) \ra H^2(X)),$$  
then the multiplication
$\fa/\fa^2 \ot \fa/\fa^2 \ra \fa^2/\fa^3$
induces an isomorphism
$$\Hom(\fa^2/\fa^3,\Z) \os{\simeq}{\lra} K$$
of Hodge structures of weight $2$.  
Taking the dual of \eqref{extension}, we obtain an exact sequence 
\begin{equation*}
0 \ra H^1(X,\Z) \ra \Hom(\fa/\fa^3,\Z)  \ra K_\Z \ra 0
\end{equation*}
of mixed Hodge structures, so it defines an element 
$$m(x) \in J\Hom(K,H^1(X)).$$

Now, let $X$ be a projective smooth curve  over $\C$ of genus $g>0$. 
Let 
$$(\ , \ ) \colon H_1(X) \ot H^1(X) \ra \Z$$
be the canonical pairing and 
$$\langle \ , \ \rangle \colon H^1(X) \ot H^1(X) \ra \Z(-1)$$
be the cup product 
$\vphi \ot \vphi' \mapsto \int_X \vphi\wedge\vphi'$. 
By the Poincar\'e duality 
$$H_1(X)=H^1(X)^* \simeq H^1(X)(1),$$ 
we can regard $m(x)$ as an element of 
$$J\Hom(K \ot H^1(X)(1),\Z) \simeq \Hom(K_\Z \ot H^1(X)_\Z,\R/\Z).$$ 
This coincides with the {\em pointed harmonic volume} of Harris \cite{harris-1} (see also \cite{harris-book}): 
\begin{theorem}[Pulte \cite{pulte}, Theorem 3.9]\label{pulte}
As a functional, $m(x)$ sends 
$$\sum_i\left(\vphi^{(i)}_1 \ot \vphi^{(i)}_2\right) \ot \vphi_3 \in K_\Z \ot H^1(X)_\Z$$ 
to
$$\int_{\g_3} \left(\sum_i \wt\vphi^{(i)}_1\wt\vphi^{(i)}_2 +\eta\right)$$
where, 
$\wt\vphi_j^{(i)}$ is a harmonic $1$-form representing $\vphi_j^{(i)}$, 
$\eta$ is a (unique) $1$-form orthogonal to all closed $1$-forms such that 
$$\sum_i \vphi^{(i)}_1\wedge \vphi^{(i)}_2 +d\eta=0,$$ 
and $\g_3$ is a loop based at $x$ whose homology class is the Poincar\'e dual of $\vphi_3$.  
\end{theorem}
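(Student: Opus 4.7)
The plan is to identify $m(x)$ via Carlson's formula for extension classes. The dual of \eqref{extension} reads
$$0 \to H^1(X,\Z) \to \Hom(\fa/\fa^3, \Z) \to K_\Z \to 0,$$
an extension of $K$ (weight $2$) by $H^1(X)$ (weight $1$), so $m(x) \in \Ext(K, H^1(X)) \simeq J\Hom(K, H^1(X))$ is represented by $r_\Z \circ s_F$ for any section $s_F \colon K_\C \to \Hom(\fa_\C/\fa_\C^3, \C)$ compatible with the Hodge filtrations and any $\Z$-linear retraction $r_\Z \colon \Hom(\fa/\fa^3, \Z) \to H^1(X,\Z)$. By Chen's $\pi_1$ de Rham theorem, $\Hom(\fa/\fa^3, \C)$ is identified with the space of closed iterated integrals of length $\leq 2$, and Hain's mixed Hodge structure on it is described by the Hodge types of the $1$-forms appearing in such integrals.

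The key step is to construct $s_F$ explicitly. Given $\xi = \sum_i \vphi_1^{(i)} \otimes \vphi_2^{(i)} \in K_\C$, choose harmonic representatives $\wt\vphi_j^{(i)}$; the vanishing of $\xi$ under cup product means that $\omega_\xi := \sum_i \wt\vphi_1^{(i)} \wedge \wt\vphi_2^{(i)}$ is exact, so one may take the unique $\eta$ with $d\eta = -\omega_\xi$ orthogonal to $\ker d$ in the $L^2$-pairing. Set
$$s_F(\xi)(\g) := \int_\g \Big(\sum_i \wt\vphi_1^{(i)} \wt\vphi_2^{(i)} + \eta\Big).$$
Formulas \eqref{formula-1}--\eqref{formula-3} together with $d\eta = -\omega_\xi$ imply that this depends only on the homotopy class of $\g$, hence defines an element of $\Hom(\fa_\C/\fa_\C^3, \C)$, whose image in $K_\C$ is $\xi$ by construction. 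The substantive content is to check that $s_F(F^p K_\C) \subset F^p\Hom(\fa_\C/\fa_\C^3, \C)$; this uses that harmonic forms in $H^{p,q}(X)$ have type $(p,q)$ and that the co-closed choice of $\eta$ places it in the correct Hodge component of Hain's bar-complex model.

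With $s_F$ in hand, pick any $\Z$-retraction $r_\Z$; the class of $r_\Z \circ s_F$ modulo $F^0\Hom(K, H^1(X))_\C + \Hom(K, H^1(X))_\Z$ is independent of the choice and equals $m(x)$. Under the identification $J\Hom(K, H^1(X)) \simeq \Hom(K_\Z \otimes H^1(X)_\Z, \R/\Z)$ via Poincar\'e duality, evaluating $m(x)$ at $\xi \otimes \vphi_3$ amounts to pairing $(r_\Z \circ s_F)(\xi) \in H^1(X, \C)$ with the homology class $[\g_3] \in H_1(X,\Z) = \fa/\fa^2$ dual to $\vphi_3$; by closedness of $s_F(\xi)$, this pairing equals $s_F(\xi)(\g_3)$ modulo $\Z$, giving exactly the stated iterated-integral formula. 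The main obstacle I expect is the Hodge-type verification in the middle step, which requires reconciling the $L^2$-orthogonality condition defining $\eta$ with Hain's Hodge filtration on $\Hom(\fa/\fa^3, \C)$.
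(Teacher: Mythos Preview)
The paper does not prove this theorem; it is quoted verbatim from Pulte \cite{pulte}, Theorem~3.9, and no argument is supplied here. There is therefore nothing in the present paper to compare your proposal against.

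That said, your outline is essentially the strategy Pulte uses: build the Hodge-filtered section $s_F$ from closed iterated integrals of harmonic forms (with the co-exact correction $\eta$), and read off the extension class via Carlson's recipe. One point in your final paragraph deserves tightening. The justification ``by closedness of $s_F(\xi)$, this pairing equals $s_F(\xi)(\g_3)$ modulo $\Z$'' is not quite the reason. What you actually need is: choose an integral section $s_\Z\colon K_\Z \to \Hom(\fa/\fa^3,\Z)$ and set $r_\Z(f)=f-s_\Z(p(f))$; then $r_\Z(s_F(\xi))=s_F(\xi)-s_\Z(\xi)$ as functionals on $\fa/\fa^3$, and evaluating at $\g_3-1\in\fa$ gives $s_F(\xi)(\g_3-1)-s_\Z(\xi)(\g_3-1)$, the second term lying in $\Z$ because $s_\Z$ is integral. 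Closedness of the iterated integral is what makes $s_F(\xi)$ descend to $\fa/\fa^3$, not what produces the $\Z$-ambiguity. You should also note explicitly that for $\xi\in K_\Z$ the harmonic representatives and $\eta$ are real, so the value lands in $\R$ before reducing modulo $\Z$.
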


Here, for the Poincar\'e duality, we follow the convention of Harris (loc. cit.) : 
$\g$ is the Poincar\'e dual of $\a$ if 
$$(\g,\vphi')=\inner{\vphi'}{\vphi}$$ 
for all $\vphi' \in H^1$. Later, we shall restrict ourselves to the case when both $\vphi_1$, $\vphi_2$  are holomorphic, so that $\eta$ in the theorem is trivial.    

Let $A$ be the Jacobian variety of $X$. 
Choosing a base point $x$, $X$ is embedded into $A$ by
$$\io_x \colon X \hookrightarrow A; \quad y \mapsto [y]-[x]. $$
It induces isomorphisms 
$$H_1(X) \os{\simeq}{\lra} H_1(A), \quad H^1(A) \os{\simeq}{\lra} H^1(X)$$
which do not depend on the choice of $x$. 
We identify these and sometimes denote them just by $H_1$, $H^1$. 
Recall that the cup product induces an isomorphism 
$$\wedge^n H^1(A) \os{\simeq}{\ra} H^n(A). $$ 
One sees easily, for example using a symplectic basis of $H^1(X,\Z)$,  
that the wedge product $K \ot H^1 \ra \wedge^3 H^1$ is surjective, so we have an injection
\begin{equation}\label{injection}
J_1(A) \hookrightarrow J\Hom(K \ot H^1(1),\Z). 
\end{equation}

The image $W_1(x):=\io_x(X)$ defines an algebraic $1$-cycle on $A$. Since the inversion and a translation fix $H_2(A,\Z)$, we have $W_1(x)-W_1^\pm(y) \in \CH_1(A)_\hom$ for any $x, y \in X$. 
Then, Harris' result  \cite{harris-1} reworked by Pulte \cite{pulte}, Theorem 4.9, is: 

\begin{theorem}\label{harris-pulte}
Let $X$ be a smooth projective curve over $\C$ and $x$, $y \in X$. 
Under \eqref{injection}, 
$\Phi_1(W_1(x)- W_1^\pm (y))$
maps to  
$m(x)\mp m(y)$, respectively. 
\end{theorem}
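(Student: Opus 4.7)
The plan is to realize both sides of the asserted identity as the same extension class of mixed Hodge structures and to compare them by evaluating as $\R/\Z$-valued functionals on suitable test tensors. Using Carlson's isomorphism $J_1(A) \simeq \Ext(H^3(A)(1),\Z)$, the identification $\wedge^3 H^1 \simeq H^3(A)$, and the surjection $K \ot H^1 \twoheadrightarrow \wedge^3 H^1$, it suffices to check that both $\Phi_1(W_1(x)-W_1^\pm(y))$ and $m(x)\mp m(y)$, pulled back via \eqref{injection} and viewed through \eqref{int-jac-real}, agree on every decomposable element $\bigl(\sum_i \vphi_1^{(i)}\ot\vphi_2^{(i)}\bigr)\ot\vphi_3 \in K_\Z \ot H^1(X)_\Z$.

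For the Abel-Jacobi side I would represent $\Phi_1$ concretely: pick a singular $3$-chain $W \subset A$ with $\pd W = W_1(x)-W_1^\pm(y)$, and pair it against a closed $(2,1)+(3,0)$-form $\o$ on $A$ lifting the Poincar\'e dual of the test tensor, with the constituent $1$-forms $\wt\vphi_j^{(i)}$ chosen harmonic on $X$ (and hence on $A$). The natural choice of $W$ exploits the Pontryagin product on $A$: the translated difference $W_1(x)-W_1(y)$ bounds the image of $[0,1]\times X$ under Pontryagin translation along a path $\s$ from $0$ to $[y]-[x]$ in $A$, while $W_1(x)-W_1^-(x)$ bounds a $3$-chain built from a null-homotopy of the inversion-fixed class $[W_1]-[W_1^-]$ combined with Pontryagin translates.

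Once $W$ is fixed, a Fubini computation on the Pontryagin product reduces the three-dimensional integral $\int_W \o$ to a length-$2$ iterated integral on $X$. In the holomorphic case one obtains
$$\int_W \o \;\equiv\; \pm\int_{\g_3} \sum_i \wt\vphi_1^{(i)}\wt\vphi_2^{(i)} \pmod{\Z},$$
where $\g_3$ is a loop on $X$ Poincar\'e dual to $\vphi_3$ and the sign records whether one uses $W_1^+$ or $W_1^-$; in the non-holomorphic case one picks up precisely the correction $1$-form $\eta$ of Theorem \ref{pulte}, which is exactly what is needed to make the length-$2$ iterated integral depend only on the homotopy class of $\g_3$. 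Matching this with the explicit formula for $m(x)$ from Theorem \ref{pulte} then gives $\Phi_1(W_1(x)-W_1^\pm(y)) = m(x)\mp m(y)$.

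The main obstacle will be careful sign and period bookkeeping. Different admissible $3$-chains $W$ differ by integral $3$-cycles on $A$, and one must verify that $\int_W \o$ changes only by an element of $H_3(A,\Z)$, so that the resulting class in $\R/\Z$ is well-defined; this is the topological counterpart of the fact that the Pulte extension class is itself defined only modulo integer periods. Second, the sign $\mp$ must be tracked with care: since the inversion of $A$ acts by $-1$ on $H_1$ and by $+1$ on $H_2$, the chains bounding $W_1(x)-W_1^-(y)$ and $W_1(x)-W_1(y)$ must be oriented so that their iterated-integral contributions combine as $m(x)\mp m(y)$ and not as $m(x)\pm m(y)$. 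Once these bookkeeping points are settled, the theorem reduces to the Fubini-plus-Stokes reduction above.
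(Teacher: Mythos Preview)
The paper does not prove this theorem at all: it is stated as a known result of Harris \cite{harris-1}, in the reformulation of Pulte \cite{pulte}, Theorem~4.9, and is quoted without proof. So there is no ``paper's own proof'' to compare your attempt against.

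As for your sketch itself: the overall strategy---identify both sides with an extension class via Carlson, then compare as $\R/\Z$-valued functionals on $K_\Z \ot H^1_\Z$---is exactly Pulte's framework, and the reduction of $\int_W \o$ to a length-two iterated integral on $X$ is the content of Harris's original computation. But what you have written is a plan, not a proof. The substantive step, namely the explicit construction of the $3$-chain $W$ and the Fubini/Stokes calculation that produces the iterated integral (and the correction form $\eta$), is asserted rather than carried out; this is precisely the nontrivial part of \cite{harris-1} and \cite{pulte}. In particular, your description ``a null-homotopy of the inversion-fixed class $[W_1]-[W_1^-]$ combined with Pontryagin translates'' is too vague to be checked, and the claimed appearance of exactly the $\eta$ of Theorem~\ref{pulte} requires a genuine argument (in Pulte's treatment it comes from identifying the Hodge-filtered section $s_F$ in Carlson's recipe, not from an ad hoc chain computation). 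If you intend to supply an independent proof rather than a citation, you will need to fill in those steps; otherwise, citing \cite{pulte}, Theorem~4.9, as the paper does, is appropriate.
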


\section{Reduction to $k=1$}

In this section, we recall product structures on the (co)homology groups 
of an Abelian variety and reduce the calculation 
of the Abel-Jacobi image of Ceresa cycles to the case $k=1$. 

\subsection{Products}
For an abelian variety $A$, let
$$\mu \colon A \times A \ra A, \quad \vD \colon A \ra A\times A$$
be the addition and the diagonal, respectively. 
The Pontryagin product 
$$* \colon \CH_p(A) \ot \CH_q(A) \ra \CH_{p+q}(A)$$
is defined to be the composition of the exterior product and the push-forward $\mu_*$. 
It makes $\CH_\bullet(A)$ a graded commutative ring. 
Similarly, we have the Pontryagin product on homology 
$$* \colon H_p(A) \ot H_q(A) \ra H_{p+q}(A),$$
which is a morphism of Hodge structures of weight $-p-q$. 
It is skew-symmetric, i.e. $\a * \b = (-1)^{pq} \b * \a$, and induces an isomorphism 
\begin{equation}\label{identification_homology}
\wedge^n H_1(A)\os{\simeq}{\lra} H_n(A). 
\end{equation}
If we identify these groups, the Pontryagin product is identified with the natural map
\begin{equation}\label{natural-map}
\wedge^pH_1(A) \ot \wedge^q H_1(A) \ra \wedge^{p+q}H_1(A).
\end{equation}
Pontryagin products on Chow groups and homology groups are compatible with the cycle maps
$$\cl_n \colon \CH_n(A) \ra H_{2n}(A,\Z).$$

On the other hand, 
the intersection product
$$\cdot \colon \CH^p(A) \ot \CH^q(A) \ra \CH^{p+q}(A)$$
and the cup product 
$$\cup \colon H^p(A) \ot H^q(A) \ra H^{p+q}(A)$$
are the composition of the exterior product and the pull-back $\vD^*$.
It induces an isomorphism 
$$\wedge^n H^1(A) \os{\sim}{\ra} H^n(A)$$
and under this identification, the cup product is identified with the natural map
$$\wedge^pH^1(A) \ot \wedge^q H^1(A) \ra \wedge^{p+q}H^1(A).$$

Therefore, the (co)homology rings $(H_\bullet(A),*)$, $(H^\bullet(A),\cup)$ are identified with the exterior algebras 
$\wedge^\bullet H_1(A)$, $\wedge^\bullet H^1(A)$, respectively, and are dual to each other. 
The canonical pairing 
$( \ , \  ) \colon H_n(A) \ot H^n(A) \ra \Z$
is identified with the pairing 
$$\wedge^n H_1(A) \ot \wedge^n H^1(A) \ra \Z$$
induced from $(\ , \ ) \colon H_1(A) \ot H^1(A) \ra \Z$, that is, 
$$(\a_1 \wedge \cdots \wedge \a_n, \vphi_1 \wedge \cdots \wedge \vphi_n) 
= \det((\a_i, \vphi_j)).$$ 

\begin{lemma}\label{dual-pont} 
Let 
$$\pi_{p,q} \colon \wedge^{p+q} H^1(A) \ra \wedge^p H^1(A) \ot \wedge^q H^1(A)$$
be the dual of the natural map \eqref{natural-map}. 
Then, 
for $\vphi = \vphi_1 \wedge \cdots \wedge \vphi_{p+q} \in \wedge^{p+q} H^1(A)$, 
we have
$$\pi_{p,q}(\vphi) = \sum_{\s \in S_{p,q}} \sgn(\s)  \vphi_{\s(1)}\wedge \cdots \wedge \vphi_{\s(p)}  
\ot \vphi_{\s(p+1)} \wedge \cdots \wedge \vphi_{\s(p+q)}$$
where we put $S_{p,q} =\{ \s \in S_{p+q}  \mid \s(1) < \cdots <\s(p), \s(p+1) < \cdots <\s(p+q)\}$. 
\end{lemma}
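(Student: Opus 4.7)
The plan is to verify the formula by pairing both sides with an arbitrary decomposable element of $\wedge^pH_1(A)\ot\wedge^qH_1(A)$ and invoking Laplace expansion of a determinant. By bilinearity, it suffices to check that for any $\a_1,\dots,\a_p,\b_1,\dots,\b_q\in H_1(A)$,
$$\bigl(\pi_{p,q}(\vphi),\,(\a_1\wedge\cdots\wedge\a_p)\ot(\b_1\wedge\cdots\wedge\b_q)\bigr)$$
agrees with the pairing of $\vphi=\vphi_1\wedge\cdots\wedge\vphi_{p+q}$ with the image of this tensor under the natural map \eqref{natural-map}, i.e.\ with $\a_1\wedge\cdots\wedge\a_p\wedge\b_1\wedge\cdots\wedge\b_q$.

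Using the determinant formula for the pairing $\wedge^n H_1(A)\ot\wedge^n H^1(A)\to\Z$ recalled just before the lemma, the latter pairing equals $\det M$, where $M$ is the $(p+q)\times(p+q)$ matrix whose $i$-th row is $\bigl((\a_i,\vphi_j)\bigr)_{j=1}^{p+q}$ for $1\le i\le p$ and $\bigl((\b_{i-p},\vphi_j)\bigr)_{j=1}^{p+q}$ for $p+1\le i\le p+q$. On the other hand, substituting the claimed shuffle expression for $\pi_{p,q}(\vphi)$ and applying the same determinant formula to each tensor factor, the left-hand pairing becomes
$$\sum_{\s\in S_{p,q}}\sgn(\s)\,\det\bigl((\a_i,\vphi_{\s(j)})\bigr)_{1\le i,j\le p}\det\bigl((\b_i,\vphi_{\s(p+j)})\bigr)_{1\le i,j\le q}.$$
This is precisely the Laplace expansion of $\det M$ along its first $p$ rows, the shuffle $\s$ recording the choice of columns $\s(1)<\cdots<\s(p)$ for the top minor and the complementary columns $\s(p+1)<\cdots<\s(p+q)$ for the bottom minor.

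The only point that requires attention is the identification of $\sgn(\s)$ with the Laplace sign: a $(p,q)$-shuffle by definition preserves the internal orders of both blocks, so its sign equals the parity of the number of transpositions needed to bring the chosen columns to the first $p$ positions while keeping those orders intact, which is exactly the Laplace sign. Granted this, the equality of the two expressions is the statement of Laplace expansion, and the lemma follows. The argument is purely linear-algebraic, and I foresee no substantive obstacle beyond the careful sign bookkeeping.
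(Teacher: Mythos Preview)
Your proof is correct and follows the same strategy as the paper's: verify the formula by pairing $\pi_{p,q}(\vphi)$ against decomposable tensors in $\wedge^pH_1\ot\wedge^qH_1$ and comparing with the defining duality. The paper fixes a basis $\{\a_i\}$ of $H_1$ with dual basis $\{\check\a_i\}$, checks the identity on the induced bases of the exterior powers (where the determinants collapse to Kronecker deltas), and then extends by multilinearity and skew-symmetry; you instead pair against arbitrary decomposables and invoke Laplace expansion, which packages the same combinatorics into one classical identity and avoids the choice of basis. These are minor variants of the same verification-by-duality argument.
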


\begin{proof}
Fix a basis $\{\a_i \mid 1 \leq i \leq 2g\}$ of $H_1$ and let $\{\check\a_i\}$ be the dual basis of $H^1$, i.e. $(\a_i, \check\a_j)=\d_{i,j}$. 
For an injection $\tau \colon \{1,2,\dots,n\} \ra   \{1,\dots,2g\}$, we put 
$$\a_\tau = \a_{\tau(1)} \wedge \cdots \wedge \a_{\tau(n)}, \quad
\check\a_\tau = \check\a_{\tau(1)} \wedge \cdots \wedge \check\a_{\tau(n)}. $$
Then those $\a_\tau$ and $\check\a_\tau$ with order-preserving $\tau$ form bases of $\wedge^n H_1$ and $\wedge^n H^1$, respectively, dual to each other. 
Then, for order-preserving injections $\tau_p \colon \{1,2,\dots,p\} \ra \{1,\dots,2g\}$, $\tau_q\colon \{1,2,\dots,q\} \ra \{1,\dots,2g\}$ with $p+q=n$, 
$$
(\a_{\tau_p} \ot \a_{\tau_q},\pi_{p,q}(\check\a_\tau)) = (\a_{\tau_p} \wedge \a_{\tau_q}, \check\a_\tau)
$$
is non-trivial if and only if there exists $\s \in S_n$ such that
$$\tau_p(i) = \tau\s(i), \quad \tau_q(j)= \tau\s(p+j)$$
for all $1 \leq i \leq p$, $1 \leq j \leq q$,  
in which case, $\a_{\tau_p} \wedge \a_{\tau_q}=\sgn(\s) \a_\tau$, so the assertion for
 $\vphi =\check\a_\tau$ is proved. Since the right-hand side of the formula is multi-linear and skew-symmetric, the general case follows.  
\end{proof}

\subsection{Ceresa cycles of higher dimension}
Let $X^k$ denote the $k$-fold product of $X$ and $X_k$ denote the $k$-th symmetric product. 
Then we have a commutative diagram
$$\xymatrix{ X^k \ar[r]^{(\io_x)^k} \ar[d]^\pi & A^k \ar[d]^{\mu} \\ X_k \ar[r]^{(\io_x)_k} & A}$$
where $\pi$ is the natural projection, $\mu$ is the addition and $(\io_x)_k$ is the induced morphism. 
For $1 \leq k \leq g$, 
$$W_k(x) := (\io_x)_k(X_k)$$
defines an algebraic $k$-cycle on $A$. 
As before, we have 
$W_k(x)-W_k^\pm(y) \ \in \CH_k(A)_\hom$
for any $x,y \in X$.

\begin{remark}
Since $W_k(x)-W_k(y) \in \CH_k(A)_\alg$, 
the class of $W_k(x)-W_k^-(y)$ modulo algebraic equivalence does
not depend on $x$ and $y$. 
\end{remark}

Since $\pi$ is a finite morphism of degree $k!$, we have 
\begin{equation}\label{W_k}
k! \cdot W_k(x) = W_1(x)^{*k}
\end{equation}
where ${}^{*k}$ denotes the $k$-fold Pontryagin product. 
Since the Pontryagin product on the Chow ring is commutative and compatible with the inversion, 
we have: 

\begin{lemma}\label{factorization} 
We have identities in $\CH_k(A)$ 
$$k! (W_k(x) - W_k^\pm(y)) = (W_1(x)-W_1^\pm(y)) * V^\pm_{k-1}(x,y),$$
where we put 
$$V_{k-1}^\pm(x,y) = \sum_{0 \leq i \leq k-1} W_1(x)^{*i}  * W_1^\pm(y)^{*(k-1-i)}. $$
\end{lemma}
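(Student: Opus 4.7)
The plan is to derive the identity directly from formula \eqref{W_k} together with the fact that $(\CH_\bullet(A),*)$ is a graded-commutative ring on which $[-1]_*$ acts as a ring homomorphism. All the work is packaged into the elementary polynomial identity $a^k-b^k = (a-b)\sum_{i=0}^{k-1} a^i b^{k-1-i}$ in a commutative ring.

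First I would record that $[-1]_*$ is a homomorphism for the Pontryagin product: since $\mu\circ([-1]\times[-1])=[-1]\circ\mu$, push-forward along $[-1]$ commutes with $*$, so
\[
[-1]_*(a*b) = ([-1]_*a)*([-1]_*b).
\]
Applying this to \eqref{W_k} (with base point $y$) gives $k!\,W_k^-(y)=W_1^-(y)^{*k}$; trivially $k!\,W_k(y)=W_1(y)^{*k}$ as well. Writing $W_1^+(y):=W_1(y)$, both cases of the $\pm$ statement are unified as $k!\,W_k^\pm(y)=W_1^\pm(y)^{*k}$.

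Next I would combine this with $k!\,W_k(x)=W_1(x)^{*k}$ to get
\[
k!\bigl(W_k(x)-W_k^\pm(y)\bigr) = W_1(x)^{*k} - W_1^\pm(y)^{*k}
\]
in $\CH_k(A)$. Since the Pontryagin product on $\CH_\bullet(A)$ is commutative, the usual factorization identity is valid, with $a=W_1(x)$ and $b=W_1^\pm(y)$:
\[
a^{*k}-b^{*k} = (a-b)*\sum_{i=0}^{k-1} a^{*i}*b^{*(k-1-i)}.
\]
Substituting yields exactly
\[
k!\bigl(W_k(x)-W_k^\pm(y)\bigr) = \bigl(W_1(x)-W_1^\pm(y)\bigr) * V^\pm_{k-1}(x,y),
\]
which is the claim.

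There is no real obstacle; the only thing to be careful about is the sign/commutativity issue, namely confirming that $[-1]_*$ commutes with $*$ and that the polynomial factorization is legitimate in the Pontryagin ring. Both follow formally from $\mu$ being a commutative group law and from the associative, commutative structure of $(\CH_\bullet(A),*)$ already recalled in \S 3.1.
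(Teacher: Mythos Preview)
Your proof is correct and is exactly the argument the paper has in mind: the paper simply remarks, just before the lemma, that it follows from \eqref{W_k} together with the commutativity of the Pontryagin product and its compatibility with the inversion, which is precisely the content you have spelled out.
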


We put 
$$w_1= \cl_1(W_1^\pm(x)) \in H_2(A,\Z)$$
which is independent of $x$.  
The homology classes of $V_{k-1}(x,y)$ and $V_{k-1}^-(x,y)$
are independent of $x$, $y$, and agree with  
$$v_{k-1}:=  k \cdot w_1^{*(k-1)} \in H_{2(k-1)}(A,\Z).$$

\begin{lemma}
Under the identification 
$H_{2(k-1)}(A,\Z) \simeq \Hom(\wedge^{2(k-1)} H_\Z^1,\Z)$, 
for $\vphi=\vphi_1 \wedge \cdots \wedge \vphi_{2(k-1)} \in \wedge^{2(k-1)}H^1_\Z$, we have 
$$(v_{k-1}, \vphi) = k!  \sum_{\s} \sgn(\s) \prod_{i=1}^{k-1} \inner{\vphi_{\s(2i-1)}}{\vphi_{\s(2i)}}$$
where $\s$ runs through the elements of $S_{2(k-1)}$ such that 
$\s(2i-1)<\s(2i)$ for  $1 \leq i \leq k-1$ and $\s(2i-1) < \s(2i+1)$ for $1 \leq i \leq k-2$. 
\end{lemma}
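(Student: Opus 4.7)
The plan is to compute $w_1^{*(k-1)}$ via the identification \eqref{identification_homology}, which converts the Pontryagin product into the wedge product \eqref{natural-map}, and then to evaluate on $\vphi$ by iterating Lemma \ref{dual-pont}. First I would make $w_1$ explicit: since $w_1 = (\iota_x)_*[X]$, for any $\vphi, \vphi' \in H^1(A) \simeq H^1(X)$ one has
$$(w_1, \vphi \wedge \vphi') = \int_X \iota_x^*\vphi \wedge \iota_x^*\vphi' = \inner{\vphi}{\vphi'},$$
which is also independent of the base point.

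Next, the iterated Pontryagin product $(H_2)^{\ot (k-1)} \ra H_{2(k-1)}$ corresponds to the iterated wedge $(\wedge^2 H_1)^{\ot (k-1)} \ra \wedge^{2(k-1)} H_1$, whose dual I would obtain by successively applying Lemma \ref{dual-pont}: first $\pi_{2,2(k-2)}$, then $\mathrm{id} \ot \pi_{2,2(k-3)}$, and so on. The upshot is that $\vphi = \vphi_1 \wedge \cdots \wedge \vphi_{2(k-1)}$ maps to
$$\sum_{\tau} \sgn(\tau) \bigotimes_{i=1}^{k-1} \bigl(\vphi_{\tau(2i-1)} \wedge \vphi_{\tau(2i)}\bigr),$$
summed over all $\tau \in S_{2(k-1)}$ satisfying only $\tau(2i-1) < \tau(2i)$ for $1 \leq i \leq k-1$ (but with no ordering imposed between distinct pairs, since at each step the newly extracted pair is a free 2-subset of whatever remains). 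Pairing with $w_1^{\ot (k-1)}$ and using the previous step gives
$$\bigl(w_1^{*(k-1)}, \vphi\bigr) = \sum_\tau \sgn(\tau) \prod_{i=1}^{k-1} \inner{\vphi_{\tau(2i-1)}}{\vphi_{\tau(2i)}}.$$

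Finally I would regroup this sum by the $S_{k-1}$-action permuting the $k-1$ pairs. Any such permutation is a product of swaps of adjacent 2-blocks, hence an even permutation of $\{1,\dots,2(k-1)\}$, so $\sgn(\tau)$ is constant on each $S_{k-1}$-orbit, and so is the product of cup pairings. Each orbit has size $(k-1)!$ and a unique representative with $\tau(2i-1) < \tau(2i+1)$ for $1 \leq i \leq k-2$, yielding
$$\bigl(w_1^{*(k-1)}, \vphi\bigr) = (k-1)! \sum_\sigma \sgn(\sigma) \prod_{i=1}^{k-1} \inner{\vphi_{\sigma(2i-1)}}{\vphi_{\sigma(2i)}}$$
with $\sigma$ as in the statement. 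The identity $v_{k-1} = k \cdot w_1^{*(k-1)}$ then upgrades $(k-1)!$ to $k!$, completing the lemma.

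The main step requiring care is the iterated form of Lemma \ref{dual-pont}: one must track that each application introduces only an internal pair-ordering constraint (not an inter-pair ordering) and that the shuffle signs at each stage compose to $\sgn(\tau)$ for the total permutation. Once this is set up, the $S_{k-1}$-orbit counting and the conversion to the Pfaffian-style sum in the statement are routine.
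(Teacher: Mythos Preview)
Your proposal is correct and follows essentially the same approach as the paper, whose proof is the one-liner ``Use Lemma \ref{dual-pont} recursively with $p=2$.'' You have simply fleshed out the details that the paper leaves implicit: the identification $(w_1,\vphi\wedge\vphi')=\inner{\vphi}{\vphi'}$, the fact that iterating $\pi_{2,\bullet}$ yields a sum over all $\tau$ with only the intra-pair constraints $\tau(2i-1)<\tau(2i)$, and the $(k-1)!$-to-$1$ regrouping under the $S_{k-1}$-action on pairs that produces the Pfaffian-style sum in the statement.
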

\begin{proof}
Use Lemma \ref{dual-pont} recursively with $p=2$. 
\end{proof}

The Pontryagin product 
$$* \colon H_3(A)(-1) \ot H_{2(k-1)}(A)(-k+1) \ra H_{2k+1}(A)(-k)$$ 
is a morphism of Hodge structures of weight $-1$ and induces a homomorphism
\begin{equation}\label{pont-int-jac}
* \colon J_1(A) \ot \Hom(\Z, H_{2(k-1)}(A)(-k+1)) \ra J_k(A). 
\end{equation}

\begin{lemma} We have an identity in $J_k(A)$ 
$$k! \cdot \Phi_k(W_k(x)-W_k^\pm(y))=\Phi_1(W_1(x)-W_1^\pm(y)) * v_{k-1} .$$
\end{lemma}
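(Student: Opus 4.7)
The plan is to combine Lemma \ref{factorization} with a general compatibility statement between the Abel-Jacobi map and the Pontryagin product, which I formulate as the following claim: for any $Z_1 \in \CH_1(A)_\hom$ and $Z_2 \in \CH_{k-1}(A)$, the product $Z_1 * Z_2$ lies in $\CH_k(A)_\hom$ and
$$\Phi_k(Z_1 * Z_2) = \Phi_1(Z_1) * \cl_{k-1}(Z_2)$$
via the map \eqref{pont-int-jac}. Granting this, I apply Lemma \ref{factorization} with $Z_1=W_1(x)-W_1^\pm(y)$ and $Z_2 = V_{k-1}^\pm(x,y)$; since $\cl_{k-1}(V_{k-1}^\pm(x,y)) = v_{k-1}$ (as noted right before the statement), the lemma follows immediately.

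To prove the claim, I would choose a piecewise smooth $3$-chain $W$ on $A$ with $\pd W = Z_1$. Since $Z_2$ is a cycle of real dimension $2(k-1)$, the product $W \times Z_2$ is a $(2k+1)$-chain in $A \times A$ with
$$\pd(W \times Z_2) = \pd W \times Z_2 + (-1)^3 W \times \pd Z_2 = Z_1 \times Z_2,$$
so the $(2k+1)$-chain $\mu_*(W \times Z_2)$ on $A$ has boundary $\mu_*(Z_1 \times Z_2) = Z_1 * Z_2$. In particular $Z_1 * Z_2$ is homologous to zero, and for any $\o \in F^{k+1}H^{2k+1}(A,\C)$ one has
$$\Phi_k(Z_1 * Z_2)(\o) \equiv \int_{\mu_*(W \times Z_2)} \o = \int_{W \times Z_2} \mu^*\o \pmod{\text{periods}}.$$

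Under the identification $H^\bullet(A) = \wedge^\bullet H^1(A)$, the pullback $\mu^*$ is the comultiplication, and only the $(3,2(k-1))$-Künneth component of $\mu^*\o$ contributes to the integral over $W \times Z_2$. By Lemma \ref{dual-pont}, this component is precisely $\pi_{3,2(k-1)}(\o)$, which by construction is the element that pairs $H_3(A) \ot H_{2(k-1)}(A)$ with $\o$ through the Pontryagin product. A Fubini-type computation then gives
$$\int_{W \times Z_2} \mu^*\o = \sum_{\s \in S_{3,2(k-1)}} \sgn(\s)\, \left(\int_W \o_{\s(1)}\wedge\o_{\s(2)}\wedge\o_{\s(3)}\right)\left(\int_{Z_2} \o_{\s(4)}\wedge\cdots\wedge\o_{\s(2k+1)}\right)$$
for a decomposable $\o$, which matches $(\Phi_1(Z_1) * \cl_{k-1}(Z_2))(\o)$ coming from the definition of the Pontryagin product on intermediate Jacobians in \eqref{pont-int-jac}.

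The main obstacle, though essentially bookkeeping, is verifying that the Künneth-plus-Fubini computation really reproduces the Pontryagin product on $J_k(A)$ defined via Hodge structures, and that signs and the Hodge filtration conditions line up. Once that compatibility is established, the signs of the inversion $(-)^-$ in $W_1^\pm$ and $V_{k-1}^\pm$ propagate through $*$ without change, yielding both the $+$ and $-$ versions of the identity simultaneously.
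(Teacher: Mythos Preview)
Your approach is essentially identical to the paper's: reduce via Lemma \ref{factorization} to the compatibility $\Phi_k(Z_1*Z_2)=\Phi_1(Z_1)*\cl_{k-1}(Z_2)$, and establish the latter by noting $\pd(W\times Z_2)=Z_1\times Z_2$ and pushing forward by $\mu$. The paper states this compatibility as a commutative diagram and dismisses it in one line; your Künneth/Fubini elaboration is correct but more detailed than what the paper supplies, and the ``bookkeeping obstacle'' you flag is not treated there either.
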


\begin{proof}
By \eqref{W_k} and Lemma \ref{factorization}, 
it suffices to show the commutativity of the diagram
$$\xymatrix{
\CH_1(A)_\hom \ot \CH_{k-1}(A) \ar[r]^(.67){*} \ar[d]_{\Phi_1 \ot \cl_{k-1}} & \CH_{k}(A)_\hom \ar[d]_{\Phi_k}\\
J_1(A) \ot \Hom(\Z, H_{2(k-1)}(A)(-k+1))  \ar[r]^(.67){*}   &J_k(A)
}$$
which is clear since $\pd W \times Z = \pd(W \times Z)$ for a topological $3$-chain $W$ and a topological $2(k-1)$-cycle $Z$,  
and $\Phi_k$ commutes with $\mu_*$. 
\end{proof}

By \eqref{int-jac-real} and \eqref{identification_homology}, 
we have
\begin{equation}\label{int-jac-hom}
J_k(A) \simeq \Hom(\wedge^{2k+1}H^1_\Z,\R/\Z),
\end{equation}
so \eqref{pont-int-jac} is identified with a map
\begin{equation}\label{ext-product}
\Hom(\wedge^3H^1_\Z,\R/\Z) \ot \Hom((\wedge^{2(k-1)} H^1)(k-1), \Z) \ra \Hom(\wedge^{2k+1}H^1_\Z, \R/\Z). 
\end{equation}
This map is the restriction of the map induced from the homomorphism 
$$\pi_{3,2(k-1)}\colon \wedge^{2k+1}H^1_\Z \ra \wedge^3 H^1_\Z \ot \wedge^{2(k-1)} H^1_\Z$$
of Lemma \ref{dual-pont}. Therefore: 
\begin{lemma}  
The image of $f \ot g$ under \eqref{ext-product} sends 
$\vphi_1 \wedge \cdots \wedge \vphi_{2k+1} \in \wedge^{2k+1} H^1_\Z$ to 
$$\sum_{\s \in S_{3,2(k-1)}} \sgn(\s) f(\vphi_{\s(1)} \wedge \vphi_{\s(2)} \wedge \vphi_{\s(3)})
g(\vphi_{\s(4)} \wedge \cdots \wedge \vphi_{\s(2k+1)}).$$
\end{lemma}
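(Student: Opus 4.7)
The plan is to derive the asserted formula directly from Lemma \ref{dual-pont} with $p=3$ and $q=2(k-1)$, combined with the description of \eqref{ext-product} given in the paragraph immediately preceding the statement.

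First, I would unwind the identification. Given $f \in \Hom(\wedge^3 H^1_\Z, \R/\Z)$ and $g \in \Hom((\wedge^{2(k-1)} H^1)(k-1), \Z) \subset \Hom(\wedge^{2(k-1)} H^1_\Z, \Z)$, the image $f * g$ under \eqref{ext-product} is the composite
\[
\wedge^{2k+1} H^1_\Z \xrightarrow{\pi_{3,2(k-1)}} \wedge^3 H^1_\Z \ot \wedge^{2(k-1)} H^1_\Z \xrightarrow{f \ot g} \R/\Z \ot \Z = \R/\Z.
\]
The fact that this is indeed \eqref{ext-product} is exactly what the preceding paragraph records: under \eqref{identification_homology} the Pontryagin product on $H_\bullet(A)$ is the wedge multiplication $\wedge^3 H_1 \ot \wedge^{2(k-1)} H_1 \to \wedge^{2k+1} H_1$, so after applying Poincar\'e duality and passing to the Hom-group descriptions \eqref{int-jac-hom} of the intermediate Jacobians, the transpose of the Pontryagin product becomes $\pi_{3,2(k-1)}$.

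Second, I would apply Lemma \ref{dual-pont} directly to $\vphi = \vphi_1 \wedge \cdots \wedge \vphi_{2k+1}$, which gives
\[
\pi_{3,2(k-1)}(\vphi) = \sum_{\s \in S_{3,2(k-1)}} \sgn(\s)\, \vphi_{\s(1)} \wedge \vphi_{\s(2)} \wedge \vphi_{\s(3)} \ot \vphi_{\s(4)} \wedge \cdots \wedge \vphi_{\s(2k+1)}.
\]
Applying $f \ot g$ term by term and using bilinearity yields the claimed expression.

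Since the lemma is essentially a repackaging of Lemma \ref{dual-pont} at $(p,q) = (3, 2(k-1))$, there is no serious obstacle. The only point requiring care is bookkeeping: one must confirm that under the chain of identifications (Poincar\'e duality $H_1 \simeq H^1(1)$, \eqref{identification_homology}, \eqref{int-jac-real}, \eqref{int-jac-hom}) the abstract map \eqref{pont-int-jac} transports to the dual of $\pi_{3, 2(k-1)}$. That compatibility is exactly the content of the preceding paragraph, so no new computation is needed.
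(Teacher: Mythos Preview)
Your proposal is correct and is exactly the paper's argument: the paper simply writes ``Therefore:'' after noting that \eqref{ext-product} is induced by $\pi_{3,2(k-1)}$, so the lemma is precisely Lemma~\ref{dual-pont} specialized to $(p,q)=(3,2(k-1))$ and paired with $f \ot g$.
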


From the above lemmas, we obtain: 
\begin{proposition}\label{ajimage-k} 
Under the identification \eqref{int-jac-hom}, we have 
for $\vphi_i \in H^1_\Z \ (1 \leq i \leq 2k+1)$, 
\begin{multline*}
k! \cdot \Phi_k(W_k(x)-W_k^\pm(y))(\vphi_1 \wedge \cdots \wedge \vphi_{2k+1})
\\=
k!  \sum_{\s} \sgn(\s)
\Phi_1(W_1(x)-W_1^\pm(y)) ( \vphi_{\s(1)} \wedge  \vphi_{\s(2)} \wedge  \vphi_{\s(3)} ) 
\prod_{i=1}^{k-1} \inner{\vphi_{\s(2i+2)}}{\vphi_{\s(2i+3)}} 
\end{multline*}
where $\s$ runs through the elements of $S_{2k+1}$ such that
$\s(1)<\s(2)<\s(3)$, $\s(2i+2) < \s(2i+3)$ for  $1 \leq i \leq k-1$, and 
$\s(2i+2) < \s(2i+4)$ for $1 \leq i \leq k-2$. 
\end{proposition}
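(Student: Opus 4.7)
The plan is to chain together the three preceding lemmas. I would first apply the lemma
\[ k! \cdot \Phi_k(W_k(x)-W_k^\pm(y)) = \Phi_1(W_1(x)-W_1^\pm(y)) * v_{k-1} \]
together with the lemma describing the Pontryagin product on $J_k(A)$ via \eqref{ext-product}, using $f = \Phi_1(W_1(x)-W_1^\pm(y))$ and $g = v_{k-1}$. This expresses the left-hand side of the proposition as
\[ \sum_{\tau \in S_{3,2(k-1)}} \sgn(\tau) \cdot f\bigl(\vphi_{\tau(1)} \wedge \vphi_{\tau(2)} \wedge \vphi_{\tau(3)}\bigr) \cdot v_{k-1}\bigl(\vphi_{\tau(4)} \wedge \cdots \wedge \vphi_{\tau(2k+1)}\bigr). \]
Next I would substitute the explicit formula for $(v_{k-1}, \cdot)$ provided by the preceding lemma, expanding the second factor as $k!$ times a sum over those $\rho \in S_{2(k-1)}$ with $\rho(2i-1)<\rho(2i)$ for $1\leq i\leq k-1$ and $\rho(2i-1)<\rho(2i+1)$ for $1\leq i\leq k-2$, weighted by $\sgn(\rho) \prod_{i=1}^{k-1} \inner{\vphi_{\tau(3+\rho(2i-1))}}{\vphi_{\tau(3+\rho(2i))}}$.

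The final step is a reindexing: to each pair $(\tau, \rho)$ I would associate $\sigma \in S_{2k+1}$ defined by $\sigma(j) = \tau(j)$ for $j \in \{1,2,3\}$ and $\sigma(3+j) = \tau(3+\rho(j))$ for $j \in \{1, \ldots, 2(k-1)\}$. Writing $\hat\rho$ for the permutation of $\{1,\ldots,2k+1\}$ fixing $\{1,2,3\}$ and acting as $\rho$ on the rest (after the obvious shift), one has $\sigma = \tau \circ \hat\rho$, so $\sgn(\sigma) = \sgn(\tau)\sgn(\rho)$. The condition $\tau \in S_{3,2(k-1)}$ gives $\sigma(1)<\sigma(2)<\sigma(3)$ directly; since $\tau$ is increasing on $\{4,\ldots,2k+1\}$, the ordering constraints on $\rho$ translate into $\sigma(2i+2)<\sigma(2i+3)$ for $1 \leq i \leq k-1$ and $\sigma(2i+2)<\sigma(2i+4)$ for $1 \leq i \leq k-2$. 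Conversely, any $\sigma$ satisfying these conditions uniquely determines $\tau$ (as the permutation sending $\{1,2,3\}$ to $\{\sigma(1),\sigma(2),\sigma(3)\}$ and increasingly ordering the remaining values) and then $\rho$. After this substitution the two nested summations collapse to a single sum over $S_{2k+1}$, yielding the claimed formula.

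The only genuine obstacle is the purely combinatorial bookkeeping of this bijection and sign identity. Neither involves any geometry beyond what the preceding lemmas already encode; once the shuffle-type correspondence between $(\tau,\rho)$ and $\sigma$ is recorded, the statement is an immediate consequence of stringing the three lemmas together.
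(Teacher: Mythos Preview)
Your proposal is correct and follows exactly the approach the paper intends: the paper simply writes ``From the above lemmas, we obtain'' before stating the proposition, and your argument spells out precisely this chaining of the three lemmas together with the straightforward $(\tau,\rho)\leftrightarrow\sigma$ reindexing.
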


\section{Fermat curve}

In this section, after recalling basic materials,  
we calculate iterated integrals and the pointed harmonic volume of the Fermat curve.  
Then, we derive a description of the Abel-Jacobi image of the Ceresa cycle. 
Finally, we introduce examples of cohomology classes of Hodge type $(k+2,k-1)+(k-1,k+2)$.   

\subsection{Iterated integral on the Fermat curve}

Let $N\ge 4$ be an integer and let 
$$X \colon x_0^N + y_0^N = z_0^N$$
be the Fermat curve of degree $N$ and $x^N+y^N=1$ be its affine equation. 
As a base point, we choose $(x,y)=(0,1)$. 
Its genus is $g=(N-1)(N-2)/2 \ge 3$ by our assumption. 
Let $\mu_N$ be the group of $N$-th roots of unity in $\C$ and put $\z=\exp(2\pi i/N)$. 
Put a group 
$$G=\mu_N \times \mu_N$$ 
and its elements $\a=(\z,1)$, $\b=(1,\z)$. 
Let $G$ act on $X$ by 
$\a^r\b^s(x,y) = (\z^rx,\z^sy)$. 
If $R$ is a ring, then the group ring $R[G]$ acts on $H_1(X,R)$ (resp. $H^1(X,R)$) by the push-forward (resp. pull-back). 

Define a path by  
$$\d \colon [0,1] \ra X; \quad t \mapsto (t^{1/N}, (1-t)^{1/N})$$
where the branches are taken in $\R_{\geq 0}$. 
If we put
$$\k = \d \cdot (\b_*\d)^{-1} \cdot (\a\b)_*\d \cdot (\a_*\d)^{-1},$$ 
it is a loop with the base point $(0,1)$. 
Then, $H_1(X,\Z)$ is a cyclic $\Z[G]$-module generated by $\k$ (Rohrlich \cite{rohrlich}). 

We put an index set
$$\bI=\{(a,b) \in (\Z/N\Z)^{\oplus 2} \mid a,b,a+b \neq 0\}. $$
For $a \in \Z/N\Z-\{0\}$, let $\angle a \in \{1,2, \dots, N-1\}$ denote its representative. 
For $(a,b)\in \bI$, we define a differential form by 
$$\o_0^{a,b} = x^{\angle a}y^{\angle{b}-N} \dlog x.$$ 
Note that $\o_0^{a,b}$ is an eigenvector for the $G$-action: 
$$(\a^r\b^s)^* \o_0^{a,b} = \z^{ar+bs} \o_0^{a,b}.$$ 
It is a differential form of the second kind (i.e. has no residues) and defines a cohomology class, 
which by abuse of notation we denote by the same letter. It is of the first kind (i.e. holomorphic) 
if and only if $(a,b)$ belongs to
$$\bI_\holo :=\{(a,b) \in \bI \mid \angle{a}+\angle{b}<N\}.$$
Note that for each $(a,b) \in \bI$, exactly one of $(a,b)$, $(-a,-b)$ is in $\bI_\holo$.  
It is well-known that 
$$\{\o_0^{a,b} \mid (a,b) \in \bI\}, \quad \{\o_0^{a,b} \mid (a,b) \in \bI_\holo\}, \quad 
\{\o_0^{-a,-b} \mid (a,b) \in \bI_\holo\}$$ 
are bases of $H^1(X,\C)$, $H^{1,0}(X)$, $H^{0,1}(X)$, respectively. 

We have 
$$\int_\d \o_0^{a,b} = \tfrac{1}{N}B\bigl(\tfrac{\angle{a}}{N}, \tfrac{\angle{b}}{N}\bigr)$$
where $B(s,t)$ is the Beta function. If we normalize as 
$$\o^{a,b} = \Bigl(\tfrac{1}{N}B\bigl(\tfrac{\angle{a}}{N}, \tfrac{\angle{b}}{N}\bigr)\Bigr)^{-1} \o_0^{a,b},$$
then we have $\int_{(\a^r\b^s)_* \d}\o^{a,b}=\z^{ar+bs}$, hence the periods
\begin{equation}\label{period}
\quad \int_{(\a^r\b^s)_*\k}\o^{a,b} = \z^{ar+bs} (1-\z^{a})(1-\z^{b}) . 
\end{equation}

By using \eqref{formula-1}, \eqref{formula-2}, we have (\cite{tadokoro-2}, Lemma 3.5)
\begin{equation*}
\int_\k \o^{a,b}\o^{c,d} = (1-\z^{a+c})(1-\z^{b+d})\int_\d \o^{a,b}\o^{c,d} + (1-\z^b)(\z^{a+c}+\z^{c+d}-\z^c-\z^d).
\end{equation*}
As a loop with the base point $(0,1)$ representing the homology class of $(\a^r\b^s)_*\k$, we choose 
$$\k^{r,s}:= \d \cdot ((\b^s)_*\d)^{-1} \cdot (\a^r\b^s)_*\k \cdot (\b^s)_*\d \cdot \d^{-1}.$$
Then we have similarly using \eqref{formula-3} 
\begin{multline*}
\int_{\k^{r,s}} \o^{a,b}\o^{c,d} = \z^{(a+c)r+(b+d)s} \int_\k \o^{a,b}\o^{c,d} \\
-\z^{ar+bs}(1-\z^a)(1-\z^b)(1-\z^{ds}) + \z^{cr+ds}(1-\z^c)(1-\z^d)(1-\z^{bs}) 
\end{multline*}
 (\cite{tadokoro-2}, Theorem 3.6).  
We have proved: 

\begin{proposition}\label{it-int} 
Let $(a,b), (c,d) \in \bI$ and $\k^{r,s}$ ($r,s \in \Z/N\Z$) be the loop as above. 
Then we have
\begin{multline*}
\int_{\k^{r,s}} \o^{a,b}\o^{c,d}
=  
\z^{(a+c)r+(b+d)s}\left\{(1-\z^{a+c})(1-\z^{b+d})\int_\d \o^{a,b}\o^{c,d} +P_1\right\} \\
+ \z^{ar+bs}(1-\z^{ds})P_2 + \z^{cr+ds}(1-\z^{bs})P_3,
\end{multline*}
where $P_i$ are polynomials in $\z^a$, $\z^b$, $\z^c$, $\z^d$ with $\Z$-coefficients independent of $r, s$. 
\end{proposition}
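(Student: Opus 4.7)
\medskip

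The proof plan rests on two facts: the forms $\o^{a,b}$ are $G$-eigenvectors with character $\a^r\b^s \mapsto \z^{ar+bs}$, and hence for any path $\tau$ we have $\int_{(\a^r\b^s)_*\tau}\o^{a,b} = \z^{ar+bs}\int_\tau \o^{a,b}$ (and the analogous identity for iterated integrals of length $2$ with character $\z^{(a+c)r+(b+d)s}$). Combined with the normalization $\int_\d \o^{a,b}=1$, this reduces the entire computation to \eqref{formula-1}, \eqref{formula-2}, \eqref{formula-3} applied in a disciplined way. The strategy is to first collapse $\int_\k \o^{a,b}\o^{c,d}$ down to $\int_\d \o^{a,b}\o^{c,d}$ plus explicit boundary terms, and then to transport the result to $\k^{r,s}$ via the conjugation formula.

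For the first step, decompose $\k = \d \cdot (\b_*\d)^{-1} \cdot (\a\b)_*\d \cdot (\a_*\d)^{-1}$ and apply \eqref{formula-1} three times. Each composition produces a mixed term $\int \o^{a,b}\cdot\int \o^{c,d}$, evaluated by \eqref{period} specialized to the appropriate $\a^r\b^s$, and an iterated integral over the remaining piece. Using \eqref{formula-2} to convert integrals over the inverse pieces $(\b_*\d)^{-1}$ and $(\a_*\d)^{-1}$ into integrals over $\b_*\d$ and $\a_*\d$ plus products of length-one integrals, every surviving length-two integral is of the form $\int_{g_*\d}\o^{a,b}\o^{c,d} = \chi(g)\int_\d \o^{a,b}\o^{c,d}$ for some character value $\chi(g) \in \{1,\z^a,\z^b,\z^{a+b},\dots\}$. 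Summing with the signs dictated by the decomposition gives the factor $(1-\z^{a+c})(1-\z^{b+d})$ in front of $\int_\d\o^{a,b}\o^{c,d}$, and the remaining scalar collects into the polynomial written as $(1-\z^b)(\z^{a+c}+\z^{c+d}-\z^c-\z^d)$; this is $P_1$.

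For the second step, write $\k^{r,s} = \eta^{-1}\cdot (\a^r\b^s)_*\k \cdot \eta$ with $\eta = (\b^s)_*\d \cdot \d^{-1}$ and apply \eqref{formula-3}. The central term $\int_{(\a^r\b^s)_*\k}\o^{a,b}\o^{c,d}$ equals $\z^{(a+c)r+(b+d)s}\int_\k\o^{a,b}\o^{c,d}$ by $G$-equivariance of the length-two iterated integral, which accounts for the first bracketed contribution. The two correction terms from \eqref{formula-3} are each a product of a length-one integral over $\eta$ (evaluating to $\z^{bs}-1$ or $\z^{ds}-1$ by the composition formula and $\int_\d\o^{a,b}=1$) with a length-one integral over $(\a^r\b^s)_*\k$ (evaluating to $\z^{ar+bs}(1-\z^a)(1-\z^b)$ or its analogue by \eqref{period}). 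Factoring out the $1-\z^{bs}$ and $1-\z^{ds}$ yields $P_2 = -(1-\z^a)(1-\z^b)$ and $P_3 = (1-\z^c)(1-\z^d)$, which are manifestly independent of $r,s$.

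The only real obstacle is bookkeeping: signs from inverses and from the fact that \eqref{formula-3} differs from \eqref{formula-2} by a sign in the last term must be tracked carefully, and one must remember that the boundary contributions from the length-one pieces in the first step assemble into a single polynomial in $\z^a,\z^b,\z^c,\z^d$ rather than cancelling in general. No essential geometric input beyond the equivariance of $\o^{a,b}$ and the normalization $\int_\d\o^{a,b}=1$ is needed.
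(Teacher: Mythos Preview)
Your proposal is correct and follows essentially the same route as the paper: first use \eqref{formula-1} and \eqref{formula-2} on the decomposition of $\k$ to obtain $\int_\k \o^{a,b}\o^{c,d} = (1-\z^{a+c})(1-\z^{b+d})\int_\d \o^{a,b}\o^{c,d} + P_1$ with $P_1=(1-\z^b)(\z^{a+c}+\z^{c+d}-\z^c-\z^d)$, and then apply \eqref{formula-3} to the conjugation $\k^{r,s}=\eta^{-1}\cdot(\a^r\b^s)_*\k\cdot\eta$ with $\eta=(\b^s)_*\d\cdot\d^{-1}$, using $G$-equivariance and \eqref{period} to read off $P_2=-(1-\z^a)(1-\z^b)$ and $P_3=(1-\z^c)(1-\z^d)$. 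This is exactly what the paper does in the two displayed formulas preceding the proposition (citing \cite{tadokoro-2}), so there is nothing to add.
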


\subsection{Harmonic volume of the Fermat curve}

Let $K=\Q(\mu_N)$ be the $N$-cyclotomic field, $\sO$ be its integer ring 
and fix a primitive $N$-th root of unity $\xi \in \sO$. 
We use notations such as $H_\sO = H \ot_\Z \sO$, $H_K = H \ot_\Z K$, $H_\C = H \ot_\Z \C$. 
Since we fixed the base point $(0,1)$, we denote the pointed harmonic volume just by $m$.  
It extends naturally to
$$m_\sO \colon K_\sO \ot H^1_\sO \ra (\sO \ot_\Z \R) / \sO. $$  
Note that 
$$ \sO \ot_\Z \R \simeq \left[\prod_{\s \colon K \hookrightarrow \C} \C \right]^+$$
where $\s$ runs through the embeddings of $K$ into $\C$ and $+$ denotes the fixed part by the complex conjugation acting on the set $\{\s\}$ and $\C$ at the same time. 
Let $m_\s$ denote the $\s$-component of $m_\sO$. 
The non-triviality of $m$ follows from that of $m_\s$ or the composition $\Tr \circ m_\sO = \sum_\s m_\s$,  where 
$$\Tr \colon (\sO \ot_\Z \R) / \sO \ra \R/\Z$$
is the trace map. 

For each $(a,b) \in \bI$, define a character by
$$\theta^{a,b} \colon G \ra \sO^*; \quad \theta^{a,b}(\a^r\b^s) = \xi^{ar+bs}$$
and let
$$p^{a,b} = \frac{1}{N^2} \sum_{g\in G} \theta^{a,b}(g^{-1})g \ \in K[G]$$
be the projector to the $\theta^{a,b}$-isotropic component. 
By the structure of $H_1(X,\Z)$ and $H^1(X,\C)$ mentioned above and the Poincar\'e duality,  
$H_1(X,k)$ admits a basis $\{(p^{a,b})_* \k  \mid (a,b) \in \bI\}$. 

For an embedding $\s \colon K \hookrightarrow \C$, let $h \in (\Z/N\Z)^*$ be the element such that $\s(\xi)=\z^h$. By \eqref{period}, $\o^{a,b}$ is in the image of  $H^1_\sO \subset H^1_k \us{\s}{\hookrightarrow} H^1_\C$. 
Let $\vphi^{a,b} \in H^1_K$ be the element corresponding under the Poincar\'e duality to 
$$\frac{N^2}{1-\xi^{-(a+b)}} (p^{-a,-b})_*\k \in H_{1,K}. $$ 

\begin{proposition}\label{varphi-ab} \ 
\begin{enumerate} 
\item
For an embedding $\s \colon K \hookrightarrow \C$ with $\s(\xi)=\z^h$,  we have
$\s(\vphi^{a,b})=\o^{ha,hb}$, hence $\vphi^{a,b} \in H^1_\sO$. 
\item 
Under the pairing 
$\langle\ , \ \rangle_\sO \colon H^1_\sO \wedge H^1_\sO \ra \sO$, 
we have
$$\inner{\vphi^{a,b}}{\vphi^{c,d}}=\begin{cases}
N^2 \frac{(1-\xi^{a})(1-\xi^{b})}{1-\xi^{a+b}} & \text{if \ $(a,b)=(-c,-d)$},\\
0 &  \text{otherwise}.
\end{cases}
$$
\end{enumerate}
\end{proposition}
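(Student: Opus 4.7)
The plan is to prove (i) by matching $\sigma(\varphi^{a,b})$ with $\omega^{ha,hb}$ through Poincar\'e duality on the basis $\{\omega^{c,d}\}$ of $H^1(X,\C)$, and then to derive (ii) as a formal consequence of (i) by specializing the defining relation to $\varphi' = \varphi^{c,d}$.

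Write $\g^{a,b} := \frac{N^2}{1-\xi^{-(a+b)}}(p^{-a,-b})_*\k$, so that by definition $\int_{\g^{a,b}}\varphi' = \inner{\varphi'}{\varphi^{a,b}}$ for every $\varphi' \in H^1_K$. Fix an embedding $\sigma$ with $\sigma(\xi) = \z^h$. Expanding $\sigma(p^{-a,-b}) = \frac{1}{N^2}\sum_{r,s}\z^{h(ar+bs)}\a^r\b^s$, applying the period formula \eqref{period}, and invoking the orthogonality of characters on $G$, I will collapse
$$\int_{\sigma(\g^{a,b})}\omega^{c,d} = \frac{(1-\z^c)(1-\z^d)}{1-\z^{-h(a+b)}}\sum_{r,s}\z^{(ha+c)r+(hb+d)s}$$
to show that it vanishes unless $(c,d) = (-ha,-hb)$, in which case it equals $\frac{N^2(1-\z^{-ha})(1-\z^{-hb})}{1-\z^{-h(a+b)}}$.

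To finish (i), this value must coincide with $\inner{\omega^{c,d}}{\omega^{ha,hb}}$. The vanishing for $(c,d) \ne (-ha,-hb)$ is a $G$-equivariance argument: $\omega^{c,d}$ and $\omega^{ha,hb}$ are eigenvectors for $G$ with characters $\theta^{c,d}$ and $\theta^{ha,hb}$ respectively, and $G$-invariance of $\int_X$ forces the cup product to vanish unless $\theta^{c+ha,d+hb}$ is trivial. The main obstacle, and the only nontrivial analytic input, is to establish $\inner{\omega^{-ha,-hb}}{\omega^{ha,hb}} = \frac{N^2(1-\z^{-ha})(1-\z^{-hb})}{1-\z^{-h(a+b)}}$; I expect to obtain this either by a direct residue calculation on affine representatives, or more cleanly by observing that the period pairing between the eigenbases $\{(p^{-a',-b'})_*\k\}_{(a',b')\in\bI}$ of $H_{1,K}$ and $\{\omega^{c',d'}\}_{(c',d')\in\bI}$ of $H^1(X,\C)$ is diagonal, and then reading off the cup-product matrix via Poincar\'e duality. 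Once matched, $\sigma(\varphi^{a,b}) = \omega^{ha,hb}$ for every $\sigma$; since the periods of $\omega^{ha,hb}$ on $H_1(X,\Z)$ lie in $\sO$, we conclude $\varphi^{a,b} \in H^1_\sO$.

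For (ii), specialize the defining relation to $\varphi' = \varphi^{c,d}$: $\inner{\varphi^{c,d}}{\varphi^{a,b}} = \int_{\g^{a,b}}\varphi^{c,d}$ in $\sO$. Applying $\sigma$ and using (i), the right-hand side becomes the integral already computed, so it vanishes unless $(c,d) = (-a,-b)$ (since $h$ is a unit modulo $N$), and otherwise equals $\frac{N^2(1-\z^{-ha})(1-\z^{-hb})}{1-\z^{-h(a+b)}}$. As this holds under every $\sigma$, it lifts to an identity in $\sO$. A short computation using antisymmetry of the cup product together with the identities $\xi^{a+b}(1-\xi^{-a})(1-\xi^{-b}) = (1-\xi^a)(1-\xi^b)$ and $\xi^{a+b}(1-\xi^{-(a+b)}) = -(1-\xi^{a+b})$ converts this into the form claimed in the statement.
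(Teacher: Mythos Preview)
Your proposal is correct and follows essentially the same route as the paper: compute the periods of $\sigma(\gamma^{a,b})$ against the eigenbasis $\{\omega^{c,d}\}$, compare with the cup products $\langle\omega^{c,d},\omega^{ha,hb}\rangle$ to identify $\sigma(\varphi^{a,b})=\omega^{ha,hb}$, and then read off (ii) by specializing. One caution: your second suggested method for obtaining $\langle\omega^{-ha,-hb},\omega^{ha,hb}\rangle$ ``via Poincar\'e duality from the diagonal period matrix'' is circular as stated, since the normalization constant is precisely what is unknown---you would need independent input such as the intersection form on the basis $\{(p^{-a,-b})_*\kappa\}$ of $H_1$; the direct residue computation (your first option, and what the paper's ``one calculates'' stands for) is the way to go.
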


\begin{proof} 
Under the Poincar\'e duality, $g_*$ on the homology corresponds to $(g^{-1})^*$ on the cohomology. 
Therefore, $(\s(p^{-a,-b}))_*\k$ correspond to a constant multiple of $\o^{ha,hb}$. 
By \eqref{period}, we have 
$$\int_{\s(p^{-a,-b})_*\k} \o^{c,d} = \int_\k \s(p^{-a,-b})^* \o^{c,d} = (1-\z^{-ha})(1-\z^{-hb})$$ 
if $(c,d)=(-ha,-hb)$, and $0$ otherwise. 
On the other hand, one calculates 
\begin{equation*}
\langle \o^{c,d}, \o^{ha,hb} \rangle 
= N^2 \frac{(1-\z^{-ha})(1-\z^{-hb})}{1-\z^{-h(a+b)}}
\end{equation*}
if $(c,d)=(-ha,-hb)$, and $0$ otherwise. 
By comparison, we obtain the result. 
\end{proof}

\begin{remark}\label{denominator}
It follows from \eqref{period} that, for any $(a, b) \in \bI$, we have
$$\frac{1}{(1-\xi^{a})(1-\xi^{b})} \ \vphi^{a,b} \in H_1(X,\sO).$$ 
\end{remark}

We restrict ourselves to the following situation. 
\begin{assumption}\label{assumption} 
Let $(a_i,b_i) \in \bI$ $(i=1,2,3)$ and assume: 
\begin{enumerate} 
\item $\sum_{i=1}^3 (a_i,b_i)=(0,0)$.   
\item For any $h \in (\Z/N\Z)^*$, we have either $(ha_i,hb_i) \in \bI_\holo$ for $i=1,2$, or 
$(ha_i,hb_i) \not\in \bI_\holo$ for $i=1,2$.
\end{enumerate} 
\end{assumption}

The assumption (i) implies that $(a_1,b_1)\neq (-a_2,-b_2)$, hence 
$$\vphi^{a_1,b_1}\ot \vphi^{a_2,b_2} \in K_\sO$$ 
by Proposition \ref{varphi-ab} (ii). 
The assumption (ii) is satisfied for example if $(a_1,b_1)=(b_2,a_2)$. 

\begin{proposition}\label{harm-vol-sigma}
Under Assumption \ref{assumption}, let $\s \colon K \hookrightarrow \C$ be an embedding with $\s(\xi)=\z^h$ 
such that $(ha_i,hb_i)\in \bI_\holo$ for $i=1,2$. 
Then we have
\begin{equation*}
m_\s(\vphi^{a_1,b_1}\ot \vphi^{a_2,b_2} \ot \vphi^{a_3,b_3}) 
= \frac{N^2(1-\z^{-ha_3})(1-\z^{-hb_3})}{1-\z^{-h(a_3+b_3)}} \int_\d \o^{ha_1,hb_1} \o^{ha_2,hb_2} + P(\z^h)
\end{equation*}
where $P$ is a polynomial with $\Z$-coefficients independent of $\s$.  
\end{proposition}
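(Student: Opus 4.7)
The plan is to apply Pulte's Theorem~\ref{pulte} directly, using the explicit description of the Poincar\'e dual from Proposition~\ref{varphi-ab} together with the iterated integral formula of Proposition~\ref{it-int}, and to extract the correct contribution via character orthogonality. First I will translate everything to the $\sigma$-picture: by Proposition~\ref{varphi-ab}(i) one has $\sigma(\vphi^{a_i,b_i})=\o^{ha_i,hb_i}$, and by hypothesis both $\o^{ha_1,hb_1}$ and $\o^{ha_2,hb_2}$ are of Hodge type $(1,0)$. Their exterior product is then a $(2,0)$-form on a curve and vanishes identically, so the form $\eta$ appearing in Theorem~\ref{pulte} is closed and orthogonal to all closed forms, hence $\eta=0$. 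Consequently $m_\sigma$ reduces to the pure iterated integral $\int_{\g_3}\o^{ha_1,hb_1}\o^{ha_2,hb_2}$ along any cycle $\g_3$ whose homology class is Poincar\'e dual to $\o^{ha_3,hb_3}$.

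Next I will write this dual cycle explicitly. By Proposition~\ref{varphi-ab} applied after $\sigma$, the Poincar\'e dual of $\o^{ha_3,hb_3}$ is the homology class of $\tfrac{N^2}{1-\z^{-h(a_3+b_3)}}(\sigma(p^{-a_3,-b_3}))_*\k$, and expanding the projector yields the representative
\[
\g_3 \;\equiv\; \frac{1}{1-\z^{-h(a_3+b_3)}}\sum_{r,s\in\Z/N}\z^{ha_3 r+hb_3 s}\,\k^{r,s},
\]
since $\k^{r,s}$ represents the homology class of $(\a^r\b^s)_*\k$. Substituting Proposition~\ref{it-int} with $(a,b,c,d)=(ha_1,hb_1,ha_2,hb_2)$ for each $\k^{r,s}$ and collecting the resulting double sums in $r,s$ is the heart of the computation.

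The crucial step is character orthogonality. Proposition~\ref{it-int} furnishes three terms whose exponential prefactors in $(r,s)$ are $\z^{h(a_1+a_2)r+h(b_1+b_2)s}$, $\z^{ha_1 r+hb_1 s}$, and $\z^{ha_2 r+hb_2 s}$. After multiplying by $\z^{ha_3 r+hb_3 s}$ and summing over $r,s\in\Z/N$, Assumption~\ref{assumption}(i) collapses the first exponent to zero, contributing a factor $N^2$ times the braced expression; the remaining two terms are annihilated, because the $r$-sums $\sum_r\z^{h(a_1+a_3)r}=\sum_r\z^{-ha_2 r}$ and $\sum_r\z^{h(a_2+a_3)r}=\sum_r\z^{-ha_1 r}$ both vanish (the condition $(a_i,b_i)\in\bI$ forces $a_1,a_2\not\equiv 0\pmod N$). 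The only surviving contribution is
\[
m_\sigma(\cdots)=\frac{N^2}{1-\z^{-h(a_3+b_3)}}\left\{(1-\z^{-ha_3})(1-\z^{-hb_3})\int_\d\o^{ha_1,hb_1}\o^{ha_2,hb_2}+P_1(\z^h)\right\},
\]
using $h(a_1+a_2)=-ha_3$ and $h(b_1+b_2)=-hb_3$ from Assumption~\ref{assumption}(i); this already displays the claimed main transcendental term.

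It remains to identify the residual $\tfrac{N^2 P_1(\z^h)}{1-\z^{-h(a_3+b_3)}}$ with a polynomial $P(\z^h)$ with $\Z$-coefficients independent of $\sigma$. The identity $\prod_{k=1}^{N-1}(1-\z^k)=N$ shows that $N/(1-\z^m)\in\sO$ for every $0<m<N$, so the residual equals $N\cdot\bigl(N/(1-\z^{-h(a_3+b_3)})\bigr)\cdot P_1(\z^h)$, an algebraic integer that is manifestly a polynomial expression in $\z^h$ with integer coefficients depending only on the fixed tuple $(a_i,b_i)$. The principal obstacle is the bookkeeping in the orthogonality step: exactly one of the three branches of Proposition~\ref{it-int} must survive, and this depends simultaneously on Assumption~\ref{assumption}(i) (to make the surviving branch resonate with the projector) and on the $\bI$-condition (to kill the off-diagonal cross terms). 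The holomorphicity afforded by Assumption~\ref{assumption}(ii) is equally essential, for otherwise a nontrivial $\eta$-contribution would spoil the clean separation of the hypergeometric term from the algebraic correction.
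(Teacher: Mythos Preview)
Your proof is correct and follows essentially the same approach as the paper's own (very terse) argument: reduce $\eta$ to zero via holomorphicity, express the Poincar\'e dual of $\o^{ha_3,hb_3}$ through the projector as a sum over $\k^{r,s}$, apply Proposition~\ref{it-int}, and use character orthogonality together with Assumption~\ref{assumption}(i) to isolate the surviving branch. You in fact supply more detail than the paper does, in particular the justification via $\prod_{k=1}^{N-1}(1-\z^k)=N$ that the residual $N^2P_1/(1-\z^{-h(a_3+b_3)})$ is a genuine $\Z$-polynomial in $\z^h$ independent of $h$.
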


\begin{proof}
By Assumption \ref{assumption} (ii) and the assumption on $\s$, we have 
$\eta=0$ in Theorem \ref{pulte}. 
By Proposition \ref{varphi-ab} (i), we are reduced to calculate 
$$ \frac{1}{1-\z^{-h(a_3+b_3)}} \sum_{r,s \in \Z/N\Z} \z^{h(a_3r+b_3s)}\int_{\k^{r,s}} \o^{ha_1,hb_1}\o^{ha_2,hb_2}.$$
Then, by Proposition \ref{it-int} and Assumption \ref{assumption} (i), we obtain the result. 
\end{proof}

\begin{remark}
If Assumption \ref{assumption} (i)  is not satisfied, still assuming $(a_1,b_1)\neq (-a_2,-b_2)$, 
then the proof of Proposition \ref{harm-vol-sigma} shows that 
$m_\sO(\vphi^{a_1,b_1}\ot \vphi^{a_2,b_2} \ot \vphi^{a_3, b_3})=0$. 
\end{remark}

\begin{corollary}\label{criterion}
If  
$$\int_\d \o^{ha_1,hb_1} \o^{ha_2,hb_2}\not\in \Q(\mu_N)$$ 
for some $h \in (\Z/N\Z)^*$, then, $W_k-W_k^-$ is non-torsion modulo algebraic equivalence for $1 \leq k \leq g-2$. 
\end{corollary}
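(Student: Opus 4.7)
The plan is to evaluate $\Phi_k(W_k-W_k^-)$ at an explicit element $v\in\wedge^{2k+1}H^1_\sO$ built from the classes $\vphi^{a,b}$, and to show that after extending scalars to $\sO$ its $\sigma$-component in $\C/\sigma(\sO)$ is non-torsion on an element that kills the Abel-Jacobi image of algebraically trivial cycles. First I would handle the case $k=1$: with $x=y$, Theorem \ref{harris-pulte} together with the injection \eqref{injection} identifies $\Phi_1(W_1-W_1^-)(\vphi^{a_1,b_1}\wedge\vphi^{a_2,b_2}\wedge\vphi^{a_3,b_3})$ with $2m(\vphi^{a_1,b_1}\otimes\vphi^{a_2,b_2}\otimes\vphi^{a_3,b_3})$, and Proposition \ref{harm-vol-sigma} expresses its $\sigma$-component as
$$2\lambda\int_\d\o^{ha_1,hb_1}\o^{ha_2,hb_2}+2P(\z^h),$$
with $\lambda\in\Q(\mu_N)^\times$ and $P\in\Z[t]$. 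By the hypothesis this sum lies outside $\Q(\mu_N)$, so its class in $\C/\sigma(\sO)$ is non-torsion.

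For $k\geq 2$, I would reduce to the $k=1$ case via Proposition \ref{ajimage-k}. Choose $k-1$ further pairs $(c_j,d_j)\in\bI$ so that $\pm(c_j,d_j)$ for $j=1,\dots,k-1$ together with $(a_1,b_1),(a_2,b_2),(a_3,b_3)$ are pairwise distinct. Since $\bI$ splits into $g$ orbits under $(a,b)\mapsto-(a,b)$, and the three $(a_i,b_i)$ lie in distinct orbits by Assumption \ref{assumption}(i), this is possible precisely when $k+2\leq g$, matching the range $1\leq k\leq g-2$. Set
$$v=\vphi^{a_1,b_1}\wedge\vphi^{a_2,b_2}\wedge\vphi^{a_3,b_3}\wedge\vphi^{c_1,d_1}\wedge\vphi^{-c_1,-d_1}\wedge\cdots\wedge\vphi^{c_{k-1},d_{k-1}}\wedge\vphi^{-c_{k-1},-d_{k-1}}.$$
By Proposition \ref{varphi-ab}(ii) the pairing $\inner{\vphi^{p,q}}{\vphi^{p',q'}}$ vanishes unless $(p,q)=-(p',q')$, so in the sum of Proposition \ref{ajimage-k} only the identity permutation survives, yielding
$$k!\cdot\Phi_k(W_k-W_k^-)(v)=k!\cdot\Phi_1(W_1-W_1^-)(\vphi^{a_1,b_1}\wedge\vphi^{a_2,b_2}\wedge\vphi^{a_3,b_3})\cdot\prod_{j=1}^{k-1}\inner{\vphi^{c_j,d_j}}{\vphi^{-c_j,-d_j}}.$$
Each pairing is a nonzero element of $\Q(\mu_N)$, so the $\sigma$-component of the right-hand side remains non-torsion.

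To upgrade non-torsion to non-torsion modulo algebraic equivalence, I would verify that $v$ has Hodge type $(k+2,k-1)+(k-1,k+2)$ under the complex structure via $\sigma$, placing $\sigma(v)$ in $F^{k+2}+\ol{F^{k+2}}$; by the discussion following \eqref{int-jac-real}, the $\sigma$-component of $\Phi_k(Z)$ then vanishes on $v$ for every $Z\in\CH_k(A)_\alg$, and the non-torsion conclusion descends to $\CH_k(A)_\hom/\CH_k(A)_\alg$. Each pair $\vphi^{c_j,d_j}\wedge\vphi^{-c_j,-d_j}$ contributes Hodge type $(1,1)$ under $\sigma$, so this reduces to showing that $\vphi^{a_1,b_1}\wedge\vphi^{a_2,b_2}\wedge\vphi^{a_3,b_3}$ is of type $(3,0)+(0,3)$, i.e., that all three of $(ha_i,hb_i)$ lie in $\bI_\holo$ or all three lie outside. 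I expect this last check to be the main subtlety, since Assumption \ref{assumption}(ii) only covers $i=1,2$; one would either need an additional constraint on $(a_3,b_3)$ relative to the chosen $h$, or pass to the conjugate embedding $-h$ to secure the remaining condition.
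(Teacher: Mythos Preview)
Your overall architecture---Harris--Pulte (Theorem \ref{harris-pulte}) together with Proposition \ref{harm-vol-sigma} for $k=1$, and Proposition \ref{ajimage-k} for the reduction from general $k$---is exactly the paper's intended route, and your verification that only the identity permutation survives in Proposition \ref{ajimage-k} is correct. The paper's own proof is much terser: it leaves the passage to general $k$ entirely implicit (relying on \S3) and treats only the harmonic volume. It also covers the situation you skipped, namely the $h$ for which $(ha_1,hb_1),(ha_2,hb_2)\notin\bI_\holo$, by invoking the $S_3$-skew-symmetry of $m$ on mutually orthogonal triples together with $m_{\ol\s}=\ol{m_\s}$ to reduce to an embedding where Proposition \ref{harm-vol-sigma} applies directly.

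Your caution about the last step is well placed, and your proposed fix does not work. Passing from $h$ to $-h$ trades two holomorphic plus one antiholomorphic for two antiholomorphic plus one holomorphic; the type of $\vphi^{a_1,b_1}\wedge\vphi^{a_2,b_2}\wedge\vphi^{a_3,b_3}$ under $\sigma$ remains $(2,1)$ or $(1,2)$, never $(3,0)+(0,3)$, so $v\notin F^{k+2}+\ol{F^{k+2}}$ and the vanishing criterion for $\Phi_k(\CH_k(A)_\alg)$ cannot be invoked. Assumption \ref{assumption}(ii) alone simply does not force $(ha_3,hb_3)$ to match the type of the first two, and the paper's proof of this corollary does not supply the missing argument either: the ``modulo algebraic equivalence'' upgrade is justified separately in Proposition \ref{F^{k+2}}, under the \emph{stronger} hypothesis that for every $h$ all three of $(ha_i,hb_i)$ lie in $\bI_\holo$ or all three lie outside (as in Examples \ref{example} and \ref{klein}). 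So your instinct that an extra constraint on $(a_3,b_3)$ is required is right; rather than trying to salvage it by conjugation, one should simply impose that constraint, which is exactly what the paper does downstream.
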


\begin{proof}
It is immediate if $h$ satisfies the assumption of the proposition. 
The harmonic volume has the following property \cite{harris-book}:  
if $\inner{\vphi_i}{\vphi_j}=0$ for any $1\leq i,j \leq 3$, then for $\tau \in S_3$, we have
$$m(x)(\vphi_{\tau(1)} \ot \vphi_{\tau(2)} \ot \vphi_{\tau(3)}) = \sgn(\s) m(x)(\vphi_1\ot\vphi_2\ot\vphi_3).$$
Therefore we can assume that $(ha_1,hb_1)$ and $(ha_2,hb_2)$ have the same type. 
Then, since $m_{\ol\s} = \ol{m_\s}$, we are reduced to the first case. 
\end{proof}

\begin{theorem}\label{trace-harmonic}
Under Assumption \ref{assumption}, we have
$$\Tr \circ m_\sO \left(\frac{\vphi^{a_1,b_1}\ot \vphi^{a_2,b_2} \ot \vphi^{a_3, b_3}}{(1-\xi^{-a_3})(1-\xi^{-b_3})}\right) 
= N^2 \sum \int_\d \o^{ha_1,hb_1} \o^{ha_2,hb_2}$$
where the sum is taken over $h \in (\Z/N\Z)^*$ such that $(ha_i,hb_i) \in \bI_\holo$ for $i=1,2$. 
\end{theorem}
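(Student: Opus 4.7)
The strategy is to decompose $\Tr\circ m_\sO$ over archimedean embeddings of $K$, apply Proposition \ref{harm-vol-sigma} on $H_+:=\{h\in(\Z/N\Z)^*:(ha_i,hb_i)\in\bI_\holo\text{ for }i=1,2\}$, and verify that the residual error contributes an integer. Set $u := \vphi^{a_1,b_1}\ot\vphi^{a_2,b_2}\ot\vphi^{a_3,b_3}/\bigl((1-\xi^{-a_3})(1-\xi^{-b_3})\bigr)$; by Remark \ref{denominator}, $u$ lies in the integral lattice, so $m_\sO(u)\in(\sO\ot\R)/\sO$ is well-defined. Parametrize embeddings by $\s_h(\xi)=\z^h$ for $h\in(\Z/N\Z)^*$. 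Under the $+$-structure $m_{\ol\s}(u)=\ol{m_\s(u)}$, and using that Assumption \ref{assumption}(ii) forces $H_+$ and $-H_+$ to partition $(\Z/N\Z)^*$, one obtains
$$\Tr\circ m_\sO(u) \equiv \sum_{h\in H_+} 2\Re\bigl(m_{\s_h}(u)\bigr) \pmod{\Z}.$$

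For each $h\in H_+$, dividing the formula of Proposition \ref{harm-vol-sigma} through by $(1-\z^{-ha_3})(1-\z^{-hb_3})$ yields
$$m_{\s_h}(u) = \frac{N^2 I_h}{1-\z^{-h(a_3+b_3)}}+\frac{P(\z^h)}{(1-\z^{-ha_3})(1-\z^{-hb_3})},\qquad I_h := \int_\d \o^{ha_1,hb_1}\o^{ha_2,hb_2}.$$
The pullback $\d^{*}\o^{a,b}=\frac{N}{B(\langle a\rangle/N,\langle b\rangle/N)}\cdot\tfrac{1}{N}t^{\langle a\rangle/N-1}(1-t)^{\langle b\rangle/N-1}\,dt$ is positive real on $(0,1)$, so for $h\in H_+$ the iterated integral $I_h$ is a positive real. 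Combined with $\Re\bigl(\tfrac{1}{1-e^{i\theta}}\bigr)=\tfrac12$ for $\theta\not\equiv 0\pmod{2\pi}$, this gives $2\Re\bigl(\tfrac{N^2 I_h}{1-\z^{-h(a_3+b_3)}}\bigr)=N^2 I_h$, producing the claimed main term $N^2\sum_{h\in H_+}I_h$ upon summation.

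It remains to show that $\sum_{h\in H_+} 2\Re\bigl(P(\z^h)/((1-\z^{-ha_3})(1-\z^{-hb_3}))\bigr) \in \Z$. Writing $2\Re(z)=z+\ol z$ and using that complex conjugation acts on embeddings by $h\leftrightarrow -h$, this sum rearranges into the full Galois trace
$$\Tr_{K/\Q}\Bigl(\tfrac{P(\xi)}{(1-\xi^{-a_3})(1-\xi^{-b_3})}\Bigr).$$
Tracking through the proof of Proposition \ref{harm-vol-sigma}, the element $P(\xi)$ takes the explicit form $N^2(1-\xi^{b_1})(\xi^{-a_3}+\xi^{a_2+b_2}-\xi^{a_2}-\xi^{b_2})/(1-\xi^{-(a_3+b_3)})$, so integrality of the quotient reduces to $\frac{N^2}{(1-\xi^{-a_3})(1-\xi^{-b_3})(1-\xi^{-(a_3+b_3)})}\in\sO$, which holds since $(1-\xi^k)\mid N$ in $\sO$ for every $k\not\equiv 0\pmod N$ (using $N=\prod_{k=1}^{N-1}(1-\z^k)$).

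The main obstacle is precisely this final algebraic-integer verification: because $(1-\xi^{-a_3})(1-\xi^{-b_3})$ is a non-unit, dividing the formula of Proposition \ref{harm-vol-sigma} through by it can a priori introduce genuinely non-integral contributions on each $\s_h$-component, and only the specific shape of the residual produced by Proposition \ref{it-int} — together with the three-factor divisibility above — ensures that the Galois trace of the error lands in $\Z$. Once this integrality is in hand, combining the main term with the vanishing of the error modulo $\Z$ yields the theorem.
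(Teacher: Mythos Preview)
Your argument follows the paper's line exactly: pair conjugate embeddings via $m_{\ol\s}=\ol{m_\s}$, use that each $I_h$ is real, and reduce the main term to the identity $\frac{1}{1-\z}+\frac{1}{1-\z^{-1}}=1$ (equivalently your $\Re\bigl(1/(1-e^{i\theta})\bigr)=\tfrac12$). The paper's proof is extremely terse and does not spell out why the $P$-contribution vanishes modulo $\Z$; you correctly isolate this as the real content and supply a justification, which is a genuine improvement in exposition.

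There is one slip in your last step. Knowing only that $(1-\xi^k)\mid N$ for each $k\not\equiv 0$ yields merely that a product of three such factors divides $N^3$, not $N^2$; so the stated reason does not by itself give
\[
\frac{N^2}{(1-\xi^{-a_3})(1-\xi^{-b_3})(1-\xi^{-(a_3+b_3)})}\in\sO.
\]
The conclusion is nevertheless true, and the fix is immediate from the same identity $N=\prod_{k=1}^{N-1}(1-\xi^k)$: the product over any \emph{subset} of the indices $\{1,\dots,N-1\}$ divides $N$. Among $\{-a_3,-b_3,-(a_3+b_3)\}$ modulo $N$ at most two can coincide, since $-a_3\equiv -(a_3+b_3)$ forces $b_3=0$ and $-b_3\equiv -(a_3+b_3)$ forces $a_3=0$, both excluded by $(a_3,b_3)\in\bI$; the only possible coincidence is $a_3=b_3$. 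Hence either all three residues are distinct and their product already divides $N$, or exactly two coincide and the product of the two distinct residues divides $N$ while the repeated factor contributes one further divisor of $N$. In either case the product divides $N^2$, and your integrality of $P(\xi)/c$ and hence of its trace follows. With this repaired, your proof is complete and matches the paper's.
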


\begin{proof}
Note that, by the assumption and Remark \ref{denominator}, the element in the parentheses is an element of $K_\sO\ot H^1_\sO$.  
Since $m_{\ol\s} = \ol{m_\s}$ and the iterated integral in Proposition \ref{harm-vol-sigma} is $\R$-valued, we are reduced to 
$\frac{1}{1-\z} + \frac{1}{1-\z^{-1}} =1$ for a root of unity $\z$ other than $1$.  
\end{proof}

Now, let $1 \leq k \leq g-2$ and $(a_i,b_i) \in \bI \ (1 \leq i \leq 2k+1)$ be distinct elements such that 
$\{(a_i,b_i) \mid i=1,2,3\}$ satisfies Assumption \ref{assumption} and $(a_{2i+2}, b_{2i+2})=(-a_{2i+3},-b_{2i+3})$ for $1 \leq i \leq k-1$. 
Put 
$$\vphi_1=\vphi^{a_1,b_1}, \quad \vphi_2=\vphi^{a_2,b_2}, \quad \vphi_3=\frac{1}{(1-\xi^{-a_3})(1-\xi^{-b_3})} \ \vphi^{a_3,b_3},$$
and, for $1 \leq i \leq k-1$, 
$$\vphi_{2i+2} = \frac{1-\xi^{a+b}}{(1-\xi^a)(1-\xi^b)}\ \vphi^{a_{2i+2},b_{2i+2}}, \quad
\vphi_{2i+3} = \vphi^{a_{2i+3},b_{2i+3}},$$
so that $\inner{\vphi_{2i+2}}{\vphi_{2i+3}}=N^2$ by Proposition \ref{varphi-ab} (ii). 
Put $\vphi = \vphi^{a_1,b_1} \wedge \cdots \wedge \vphi^{a_{2k+1},b_{2k+1}}$. 

\begin{corollary}\label{trace-harmonic-k} 
With the notations as above, we have
$$k! \cdot \Tr \circ \Phi_k(W_k-W_k^-)(\vphi) = k! \cdot 2N^{2k}\sum \int_\d \o^{ha_1,hb_1} \o^{ha_2,hb_2}$$
where the sum is as in Theorem \ref{trace-harmonic}. 
\end{corollary}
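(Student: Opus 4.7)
The plan is to apply Proposition \ref{ajimage-k} to the wedge $\vphi = \vphi^{a_1,b_1} \wedge \cdots \wedge \vphi^{a_{2k+1},b_{2k+1}}$ and collapse the resulting sum over permutations down to a single surviving term, using the non-degeneracy structure of the pairings $\inner{\vphi^{a,b}}{\vphi^{c,d}}$. Since the elements $\vphi_1, \ldots, \vphi_{2k+1}$ defined just before the corollary differ from the corresponding $\vphi^{a_i,b_i}$ only by nonzero scalars, $\vphi_1 \wedge \cdots \wedge \vphi_{2k+1}$ is a scalar multiple of $\vphi$, and it is more convenient to evaluate $k! \cdot \Tr \circ \Phi_k(W_k - W_k^-)$ at the former; the scalars involved are exactly those absorbed into the normalized $\vphi_3$ and into Theorem \ref{trace-harmonic}.

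To collapse the sum from Proposition \ref{ajimage-k}, I would first note that by Proposition \ref{varphi-ab}(ii) the pairing $\inner{\vphi^{a,b}}{\vphi^{c,d}}$ vanishes unless $(a,b) = -(c,d)$; hence only permutations $\s$ for which each pair $\{\s(2i+2),\s(2i+3)\}$ is one of the designed dual blocks $\{2j+2, 2j+3\}$ contribute (assuming that no unintended coincidence $(a_i,b_i)=-(a_j,b_j)$ occurs outside the intended pairings). This forces $\{\s(1),\s(2),\s(3)\} = \{1,2,3\}$, and the condition $\s(1) < \s(2) < \s(3)$ pins down $(\s(1),\s(2),\s(3)) = (1,2,3)$. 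The remaining constraints $\s(2i+2) < \s(2i+3)$ and $\s(2i+2) < \s(2i+4)$ force the dual blocks to appear in their natural order since their minima $2j+2$ are strictly increasing in $j$, so the only surviving $\s$ is the identity. By construction $\inner{\vphi_{2i+2}}{\vphi_{2i+3}} = N^2$ for each $i$, contributing a total factor $N^{2(k-1)}$.

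Finally, I would apply $\Tr$ and combine with Theorems \ref{harris-pulte} and \ref{trace-harmonic}. By Harris--Pulte at the fixed base point $x = (0,1)$, the map $\Phi_1(W_1 - W_1^-)$ corresponds under \eqref{injection} to $m - (-m) = 2m$; its value on $\vphi_1 \wedge \vphi_2 \wedge \vphi_3$ thus equals twice the value of $m$ on a lift to $K \otimes H^1$, and the skew-symmetry of $m$ on triples with vanishing mutual pairings (already used in the proof of Corollary \ref{criterion}) identifies that value with $m(\vphi_1 \otimes \vphi_2 \otimes \vphi_3)$. Applying $\Tr$ and invoking Theorem \ref{trace-harmonic} turns this into $2 N^2 \sum \int_\d \o^{ha_1,hb_1} \o^{ha_2,hb_2}$; multiplying by $N^{2(k-1)}$ from the pairings and by $k!$ yields the claimed identity. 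The main obstacle will be the combinatorial collapse in the second step: one must check carefully that the ordering constraints, together with the pairing non-vanishing, truly leave only the identity permutation and that no subtle sign or multiplicity arises.
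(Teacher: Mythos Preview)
Your proposal is correct and follows essentially the same route as the paper, which simply cites Theorem \ref{harris-pulte}, Proposition \ref{ajimage-k} and Theorem \ref{trace-harmonic}; you have supplied the combinatorial collapse (only $\s=\mathrm{id}$ survives, by distinctness of the $(a_i,b_i)$ together with Proposition \ref{varphi-ab}(ii)) and the passage from $\Phi_1$ to $2m$ that the paper leaves implicit. Your parenthetical worry about ``unintended coincidences'' is in fact excluded by the distinctness hypothesis, and your choice to evaluate on $\vphi_1\wedge\cdots\wedge\vphi_{2k+1}$ is exactly what the normalizations of the $\vphi_i$ are designed for.
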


\begin{proof}
This follows from Theorem \ref{harris-pulte}, Proposition \ref{ajimage-k} and Theorem \ref{trace-harmonic}.  
\end{proof}

\subsection{Examples}

For the non-triviality of $W_k-W_k^-$ modulo algebraic equivalence, not only rational equivalence, 
we consider the following case.   
\begin{proposition}\label{F^{k+2}}
Let $\vphi \in \wedge^{2k+1} H^1_\sO$ be as in Corollary \ref{trace-harmonic-k}. 
Assume further that for any $h \in (\Z/N\Z)^*$, we have either $(ha_i,hb_i) \in \bI_\holo$ for $i=1,2,3$, or 
$(ha_i,hb_i) \not\in \bI_\holo$ for $i=1,2,3$. 
Then we have 
$$\vphi \in \left(\wedge^{2k+1} H^1_\Z \cap (F^{k+2}+ \ol{F^{k+2}})\right) \ot_\Z \sO.$$ 
\end{proposition}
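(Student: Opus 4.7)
The plan is to compute the Hodge type of the image $\sigma(\vphi)$ under each embedding $\sigma\colon K\hookrightarrow\C$, and then to descend from the statement ``$\sigma(\vphi)\in W_\C$ for every $\sigma$'' to the desired integral-plus-$\sO$ statement using a Vandermonde-style argument. Throughout write $V=\wedge^{2k+1}H^1_\Z$ and $W_\C=F^{k+2}V_\C+\ol{F^{k+2}}V_\C$, so $W_\C$ is the direct sum of all $(p,q)$-components with $p\geq k+2$ or $q\geq k+2$.

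First, I would unwind the Hodge type of $\sigma(\vphi)$. By Proposition \ref{varphi-ab}(i), for $\sigma$ with $\sigma(\xi)=\z^h$ one has $\sigma(\vphi^{a_i,b_i})=\o^{ha_i,hb_i}$, which is of type $(1,0)$ or $(0,1)$ according to whether $(ha_i,hb_i)\in\bI_\holo$ or not. For each of the pairs $(2i{+}2,2i{+}3)$ with $1\leq i\leq k-1$ we have $(a_{2i+3},b_{2i+3})=-(a_{2i+2},b_{2i+2})$, and since $(a,b)\in\bI_\holo$ exactly when $(-a,-b)\notin\bI_\holo$, the wedge $\o^{ha_{2i+2},hb_{2i+2}}\wedge\o^{ha_{2i+3},hb_{2i+3}}$ is of pure type $(1,1)$ for every $h\in(\Z/N\Z)^*$.

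Next, the additional hypothesis of the proposition guarantees that for each $h\in(\Z/N\Z)^*$ the three forms $\o^{ha_i,hb_i}$ ($i=1,2,3$) are either all holomorphic or all antiholomorphic. Hence $\o^{ha_1,hb_1}\wedge\o^{ha_2,hb_2}\wedge\o^{ha_3,hb_3}$ has type $(3,0)$ or $(0,3)$. Combining the two blocks, $\sigma(\vphi)$ is of pure Hodge type $(k+2,k-1)$ or $(k-1,k+2)$, and in either case $\sigma(\vphi)\in W_\C$.

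The remaining (and only slightly subtle) step is to translate ``$\sigma(\vphi)\in W_\C$ for every $\sigma$'' into $\vphi\in (V\cap W_\C)\ot_\Z\sO$. Choosing the $\Z$-basis $1,\xi,\ldots,\xi^{\varphi(N)-1}$ of $\sO$, write
\[
\vphi=\sum_{j=0}^{\varphi(N)-1} v_j\ot\xi^j,\qquad v_j\in V.
\]
Applying each embedding $\sigma$ gives $\sigma(\vphi)=\sum_j \sigma(\xi)^j\,v_j\in W_\C$. The square matrix $(\sigma(\xi)^j)_{\sigma,j}$ realises the isomorphism $\sO\ot_\Z\C\xrightarrow{\sim}\prod_\sigma\C$ and is invertible (Vandermonde in the roots of unity $\sigma(\xi)$). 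Inverting it expresses each $v_j$ as a $\C$-linear combination of the $\sigma(\vphi)$, so $v_j\in W_\C$, and therefore $v_j\in V\cap W_\C$. This yields the claimed decomposition.

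The main obstacle, such as it is, lies in the bookkeeping of the Hodge bi-degree across the $k+1$ tensor blocks; the descent step is formal once one recognises the Vandermonde structure of $\sO\ot\C\cong\prod_\sigma\C$. No genuinely new input beyond Proposition \ref{varphi-ab}(i) and the combinatorial hypothesis on the indices is needed.
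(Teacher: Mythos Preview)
Your argument is correct and follows the same route as the paper: the paper's proof is the single sentence ``It follows from Proposition \ref{varphi-ab} (i) and our assumptions,'' and you have simply unpacked this, computing the Hodge type of $\sigma(\vphi)$ via Proposition \ref{varphi-ab}(i) exactly as intended. The Vandermonde descent step you spell out is the standard (and here tacitly assumed) reason why ``$\sigma(\vphi)\in W_\C$ for all $\sigma$'' is equivalent to $\vphi\in (V\cap W_\C)\ot_\Z\sO$; nothing beyond what the paper invokes is needed.
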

\begin{proof}
It follows from Proposition \ref{varphi-ab} (i) and our assumptions.
\end{proof}

Therefore, the non-vanishing of $\Phi_k(W_k-W_k^-)(\vphi)$ for such $\vphi$ implies that $W_k-W_k^- \not\in \CH_k(A)_\alg$. 
Now we give such examples. Note that it suffices to give $\{(a_i,b_i) \mid i=1,2,3\}$,  
since we can always find the others by the assumption $k \leq g-2$. 

\begin{example}\label{example}
If we put 
$$(a_1, b_1)=(1,-2),\quad (a_2,b_2)=(-2,1), \quad (a_3,b_3)=(1,1),$$
then they satisfy the assumption of Proposition \ref{F^{k+2}}. We have 
$(ha_i,hb_i) \in \bI_\holo$ for $i=1,2,3$ if and only if $\angle{h}<N/2$. 
\end{example}

\begin{example}\label{klein}
Let $N=7$ and put
$$(a_1,b_1)=(1,2), \quad (a_2,b_2)=(2,4), \quad (a_3,b_3)=(4,1).$$ 
Then they satisfy the assumption of Proposition \ref{F^{k+2}}.  
We have $(ha_i,hb_i) \in \bI_\holo$ for $i=1,2,3$ if and only if $h=1,2,4$. 

It is related the study of the Klein quartic curve 
$$C \colon u_0^3v_0+v_0^3w_0+w_0^3u_0=0$$
whose genus is $3$. 
Consider the unramified morphism of degree $7$ 
$$f \colon X \ra C; \quad (u_0:v_0:w_0) = (x_0^3z_0:-y_0^3x_0:z_0^3y_0).$$ 
There is a unique holomorphic $1$-form $\o_i$ on $C$ such that $f^*\o_i = \o^{a_i,b_i}$  
and $H_1(C,\Z)$ is generated by $\{f_*((\a^r)_* (\d \cdot (\a_*\d)^{-1})) \mid r \in \Z/7\Z\}$ 
(cf. \cite{tadokoro}). 
Since 
$$\int_{f_*((\a^r)_* (\d \cdot (\a_*\d)^{-1}))} \o_i =
\int_{(\a^r)_* (\d \cdot (\a_*\d)^{-1})} \o^{a_i,b_i} =
 \z^{ra_i}(1-\z^{a_i}), $$
 we have
$$\frac{\vphi^{1,2}\wedge \vphi^{2,4} \wedge \vphi^{4,1}}{(1-\xi^{-4})(1-\xi^{-1})} \in f^*(\wedge^3H^1(C,\sO)).$$
Therefore, if the quantity of Corollary \ref{trace-harmonic-k} is not a rational integer (which will be verified in \S5.3), 
then the Ceresa cycle ($k=1$) of $C$ is non-trivial modulo algebraic equivalence, 
which reproves Tadokoro's theorem \cite{tadokoro}. 
\end{example}

\begin{remark}
In fact, it is known that the Ceresa cycle ($k=1$) of $C$ is non-torsion modulo algebraic equivalence. 
In \cite{kimura}, Kimura proves that the image of the modified diagonal cycle 
of Gross-Schoen \cite{gross-schoen} under the $\ell$-adic Abel-Jacobi map (for some prime $\ell$) is non-torsion, and the image equals the image of the Ceresa cycle times $3$ (Colombo-van Geemen \cite{c-vg}, Proposition 2.9). 
\end{remark}

\section{Generalized hypergeometric functions}

In this section, after recalling generalized hypergeometric functions of Barnes and Appell, 
we express the harmonic volume of the Fermat curve 
in terms of special values of those functions. 
Finally, we compute examples and prove Theorem \ref{main-theorem}. 

\subsection{Barnes' hypergeometric function ${}_3F_2$}

Let 
$$(a,n) = a(a+1)\cdots (a+n-1) = \vG(a+n)/\vG(a)$$ 
be the Pochhammer symbol, where $\vG(s)$ is the Gamma function. For $d, e \not\in \{0,-1, -2, \dots\}$, 
Barnes hypergeometric function ${}_3F_2$ is defined by 
\begin{equation*}
\Fx{a}{b}{c}{d}{e} = \sum_{n\ge 0} \frac{(a,n)(b,n)(c,n)}{(d,n)(e,n)(1,n)} x^n .
\end{equation*} 
Its radius of convergence is $1$, but if $\mathrm{Re}(d+e-a-b-c)>0$, it also converges for 
$|x|=1$. 
We use the standard notation
$$\G{\a_1, \dots, \a_m}{\b_1,\dots, \b_n} = \frac{\vG(\a_1)\cdots \vG(\a_m)}{\vG(\b_1)\cdots \vG(\b_n)}.$$
By the integral representation of Euler type (cf. \cite{slater}), we have  
\begin{multline*}
\int_{[0,1]} u^{\a_1-1}(1-u)^{\b_1-1}du \,  v^{\a_2-1}(1-v)^{\b_2-1} dv
\\ 
= \G{\a_1, \b_2, \a_1+\a_2}{\a_1+1,\a_1+\a_2+\b_2} 
\F{\a_1}{1-\b_1}{\a_1+\a_2}{\a_1+1}{\a_1+\a_2+\b_2}.
\end{multline*}
By using Dixon's formula (cf. \cite{slater}) repeatedly, we obtain ten such expressions. 
\begin{lemma}\label{dixon}
If $0 < \a_i, \b_i < 1$, then we have
\allowdisplaybreaks{
\begin{align*}
&\int_{[0,1]} u^{\a_1-1}(1-u)^{\b_1-1}du \,  v^{\a_2-1}(1-v)^{\b_2-1}dv\\ 
&= \G{\a_1, \b_2, \a_1+\a_2}{\a_1+1,\a_1+\a_2+\b_2} 
\F{\a_1}{1-\b_1}{\a_1+\a_2}{\a_1+1}{\a_1+\a_2+\b_2}\\ 
&= \G{\a_1,\b_2,\b_1+\b_2}{\b_2+1,\a_1+\b_1+\b_2}
\F{1-\a_2}{\b_2}{\b_1+\b_2}{\b_2+1}{\a_1+\b_1+\b_2}\\
&=\G{\a_1,\b_2,\a_1+\a_2,\b_1+\b_2}{\a_1+1,\a_2+\b_2,\a_1+\b_1+\b_2}
\F{\a_1}{1-\a_2}{\a_1+\b_1}{\a_1+1}{\a_1+\b_1+\b_2}\\
&= \G{\a_1,\b_2,\a_1+\a_2,\b_1+\b_2}{\b_2+1,\a_1+\b_1,\a_1+\a_2+\b_2}
\F{1-\b_1}{\b_2}{\a_2+\b_2}{\b_2+1}{\a_1+\a_2+\b_2}\\
&= \G{\a_1,\a_1+\a_2,\b_1+\b_2}{\a_1+1,\a_1+\a_2+\b_1+\b_2} 
\F{\a_1+\a_2}{\a_1+\b_1}{1}{\a_1+1}{\a_1+\a_2+\b_1+\b_2}\\
&= \G{\a_1+\a_2,\b_2, \b_1+\b_2}{\b_2+1, \a_1+\a_2+\b_1+\b_2} 
\F{\a_2+\b_2}{\b_1+\b_2}{1}{\b_2+1}{\a_1+\a_2+\b_1+\b_2}\\
&= \G{\a_1, \b_2, \a_1+\a_2, \b_1+\b_2}{1-\a_2, \a_1+\a_2+\b_2, \a_1+\a_2+\b_1+\b_2} \\* 
& \phantom{AAAAAAAAAAAAA}
\times \F{\a_1+\a_2}{\a_2+\b_2}{\a_1+\a_2+\b_1+\b_2-1}{\a_1+\a_2+\b_2}{\a_1+\a_2+\b_1+\b_2}\\
&= \G{\a_1,\b_2,\a_1+\a_2,\b_1+\b_2}{1-\b_1, \a_1+\b_1+\b_2, \a_1+\a_2+\b_1+\b_2} \\* 
& \phantom{AAAAAAAAAAAAA}
\times \F{\a_1+\b_1}{\b_1+\b_2}{\a_1+\a_2+\b_1+\b_2-1}{\a_1+\b_1+\b_2}{\a_1+\a_2+\b_1+\b_2}\\
&=\G{\a_1,\b_2,\a_1+\a_2, \b_1+\b_2}{\a_1+1,\b_2+1,\a_1+\a_2+\b_1+\b_2-1}
\F{1-\a_2}{1-\b_1}{1}{\a_1+1}{\b_2+1}\\
&= \G{\a_1, \b_2, \a_1+\a_2,\b_1+\b_2}{\a_1+\a_2+\b_2,\a_1+\b_1+\b_2} 
\F{\a_1}{\b_2}{\a_1+\a_2+\b_1+\b_2-1}{\a_1+\a_2+\b_2}{\a_1+\b_1+\b_2} 
\end{align*}
}
where the ninth one converges when $\a_1+\a_2+\b_1+\b_2<1$. 
\end{lemma}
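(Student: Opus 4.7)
Proof strategy: Expression~1 is derived in the display immediately preceding the lemma statement by expanding $\int_0^v u^{\a_1-1}(1-u)^{\b_1-1}du$ as $\frac{v^{\a_1}}{\a_1}\,{}_2F_1(\a_1, 1-\b_1; \a_1+1; v)$, multiplying by $v^{\a_2-1}(1-v)^{\b_2-1}$, and integrating term-by-term against $[0,1]$ using Euler's Beta integral. The remaining nine identities assert nine further ${}_3F_2$ representations of the same quantity; my plan is to derive them purely from Thomae--Whipple transformations of ${}_3F_2(\cdot;1)$, without any further integration.

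Recall (cf.\ Slater \cite{slater}, Ch.~2--3) that the Thomae--Whipple symmetry group of ${}_3F_2(\cdot;1)$ is generated by the obvious $S_3\times S_2$ acting on numerator resp.\ denominator parameters together with one additional involution, producing the full symmetric group $S_5$. Modulo the internal $S_3\times S_2$ of order $12$, this yields $|S_5|/12 = 10$ distinct ${}_3F_2$ representatives of a common normalized quantity. The ten expressions of the lemma are precisely these ten Thomae--Whipple representatives attached to Expression~1.

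The plan is then: starting from Expression~1, apply Thomae's fundamental transformation
$${}_3F_2(a,b,c;d,e;1) = \G{e,\,d+e-a-b-c}{e-a,\,d+e-b-c}\,{}_3F_2(a,\,d-b,\,d-c;\,d,\,d+e-b-c;1)$$
together with its companions (obtained by permuting parameters before and after application) to traverse the ten-element orbit, matching the Gamma ratios and ${}_3F_2$ arguments to each asserted expression. A convenient shortcut for Expression~2 comes directly from the integral itself: the change of variables $(u,v)=(1-v',1-u')$ preserves the domain $\{0<u<v<1\}$ and implements the parameter involution $(\a_1,\b_1,\a_2,\b_2)\leftrightarrow(\b_2,\a_2,\b_1,\a_1)$, so applying it to Expression~1 produces Expression~2 immediately and cuts the Thomae bookkeeping roughly in half by symmetry.

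The principal difficulty is purely combinatorial: identifying the Thomae permutation realizing each target expression and tracking the corresponding Gamma-factor adjustments. Convergence at $x=1$ demands positivity of the excess $d+e-a-b-c$; for Expressions~1--8 and~10 this excess reduces to one of $1-\a_i$ or $1-\b_j$, positive under the standing hypothesis $0<\a_i,\b_j<1$, while Expression~9 has excess $\a_1+\a_2+\b_1+\b_2-1$, whence the auxiliary convergence constraint recorded in the lemma.
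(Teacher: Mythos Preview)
Your approach is essentially the same as the paper's: the paper derives the first expression via the Euler integral representation (the display preceding the lemma) and then writes ``By using Dixon's formula (cf.\ \cite{slater}) repeatedly, we obtain ten such expressions,'' i.e., it traverses the same two-term ${}_3F_2(1)$ transformation orbit that you call Thomae--Whipple. Your remarks on the $S_5/(S_3\times S_2)$ coset count, the $(u,v)\leftrightarrow(1-v,1-u)$ shortcut for Expression~2, and the excess computations for convergence are correct elaborations, not a different route.
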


We shall use the last one, which is most symmetric and has the rapidest convergence. 

\subsection{Appell's hypergeometric function $F_3$}

Appell's hypergeometric function $F_3$ of two variables \cite{appell} is defined by
$$F_3(\a,\a',\b,\b',\g;x,y) = \sum_{m,n \ge 0} \frac{(\a,m)(\a',n)(\b,m)(\b',n)}{(\g,m+n)(1,m)(1,n)}\ x^my^n.$$
It converges at $(x,y)=(1,1)$ If $\mathrm{Re}(\g-\a-\b)>0$ and $\mathrm{Re}(\g-\a'-\b')>0$.   
When $\g=\a+\a'+1$, by comparing the integral representations of ${}_3F_2$ and $F_3$, we have 
\begin{multline*}
F_3(\a,\a',\b,\b',\a+\a'+1;x,1)\\
= \G{\a+\a'+1, \a-\b'+1}{\a+1,\a+\a'-\b'+1} \Fx{\a}{\b}{\a-\b'+1}{\a+1}{\a+\a'-\b'+1}.
\end{multline*}
In particular, we have
\begin{multline*}
\int_{[0,1]} u^{\a_1-1}(1-u)^{\b_1-1}du \,  v^{\a_2-1}(1-v)^{\b_2-1} dv
\\ = \G{\a_1,\b_2}{\a_1+\b_2+1} F_3(\a_1,\b_2,1-\b_1,1-\a_2,\a_1+\b_2+1; 1,1). 
\end{multline*}

\begin{remark}
Similar special values of ${}_3F_2$ and $F_3$ also appear in the description of the Beilinson regulator of motives associated Fermat curves \cite{otsubo}. 
\end{remark}

\subsection{Harmonic volume and hypergeometric values}

Let the notations be as in \S 4. 
Since 
$$\d^* \o_0^{a,b} =  t^{\frac{\angle{a}}{N}}(1-t)^{\frac{\angle{b}}{N}-1} \frac{1}{N} \frac{dt}{t}$$
and by the well-known formula 
$$B(s,t) = \frac{\vG(s)\vG(t)}{\vG(s+t)},$$ 
we obtain from the last expression of Lemma \ref{dixon}: 
\begin{proposition}\label{it-int-fermat} 
For $(a_i,b_i) \in \bI$, put $\a_i =\angle{a_i}/N$, $\b_i =\angle{b_i}/N$. 
Then we have
\begin{multline*}
\int_\d \o^{a_1,b_1} \o^{a_2,b_2} 
= \G{\a_1+\a_2, \b_1+\b_2, \a_1+\b_1, \a_2+\b_2}{\a_2,\b_1,\a_1+\a_2+\b_2, \a_1+\b_1+\b_2} \\
\times \F{\a_1}{\b_2}{\a_1+\a_2+\b_1+\b_2-1}{\a_1+\a_2+\b_2}{\a_1+\b_1+\b_2} .
\end{multline*}
\end{proposition}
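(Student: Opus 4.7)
The plan is to compute $\int_\d \o^{a_1,b_1}\o^{a_2,b_2}$ directly from the definition of the iterated integral by pulling back to $[0,1]$ via $\d$, and then to invoke the tenth (last) equality of Lemma \ref{dixon}.

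First, I combine the pullback formula $\d^*\o_0^{a,b} = \frac{1}{N}\,t^{\a-1}(1-t)^{\b-1}\,dt$ (with $\a=\angle{a}/N$ and $\b=\angle{b}/N$) together with the normalization $\o^{a,b} = (N/B(\a,\b))\,\o_0^{a,b}$ recalled in \S 4.1 to obtain
\begin{equation*}
\d^*\o^{a,b} = \frac{1}{B(\a,\b)}\,t^{\a-1}(1-t)^{\b-1}\,dt.
\end{equation*}
Substituting this into the definition of the length-two iterated integral produces
\begin{equation*}
\int_\d \o^{a_1,b_1}\o^{a_2,b_2} = \frac{1}{B(\a_1,\b_1)B(\a_2,\b_2)}\int_0^1\!\!\int_0^{t_2} t_1^{\a_1-1}(1-t_1)^{\b_1-1}t_2^{\a_2-1}(1-t_2)^{\b_2-1}\,dt_1\,dt_2,
\end{equation*}
and the double integral on the right is precisely the left-hand side of Lemma \ref{dixon}.

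Next I substitute the tenth expression of Lemma \ref{dixon} for the double integral. Using $B(\a_i,\b_i)=\vG(\a_i)\vG(\b_i)/\vG(\a_i+\b_i)$ in the prefactor and absorbing it into the $\G$-symbol provided by Lemma \ref{dixon}, the factors $\vG(\a_1)$ and $\vG(\b_2)$ in the top of that symbol cancel the corresponding factors coming from the Beta denominators, while $\vG(\a_1+\b_1)$ and $\vG(\a_2+\b_2)$ migrate to the top and $\vG(\a_2)$ and $\vG(\b_1)$ migrate to the bottom. This purely algebraic reshuffling yields exactly the $\G$-symbol stated in the proposition, and the hypergeometric factor $\F{\a_1}{\b_2}{\a_1+\a_2+\b_1+\b_2-1}{\a_1+\a_2+\b_2}{\a_1+\b_1+\b_2}$ is carried over unchanged.

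The argument is essentially automatic once Lemma \ref{dixon} is in hand, so no real obstacle remains beyond routine Gamma-factor bookkeeping. The one substantive choice is to pick the tenth form of Lemma \ref{dixon} rather than any of the other nine: it is the most symmetric in the four parameters and its resulting ${}_3F_2$ series at $x=1$ has the rapidest convergence, which is needed for the numerical computations later in the section.
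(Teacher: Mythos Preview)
Your proof is correct and follows essentially the same approach as the paper: pull back along $\d$, recognize the resulting double integral as the left-hand side of Lemma \ref{dixon}, and apply its last expression together with $B(s,t)=\vG(s)\vG(t)/\vG(s+t)$ to simplify the Gamma factors. The paper's proof is a terse two-line hint to exactly this computation, and you have filled in the details faithfully.
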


Now we apply this to Example \ref{example}. 
For an integer $h$ with $0<h<N/2$, $(h,N)=1$, Proposition \ref{it-int-fermat} reads 
\begin{equation*}
\int_\d \o^{h,-2h} \o^{-2h,h} 
= \frac{\vG\bigl(1-\tfrac{h}{N}\bigr)^4}{\vG\bigl(1-\tfrac{2h}{N}\bigr)^2} \  
\F{\tfrac{h}{N}}{\tfrac{h}{N}}{1-\tfrac{2h}{N}}{1}{1}
\end{equation*}
and by Corollary \ref{criterion}, Theorem \ref{main-theorem} (i) follows. 
Corollary \ref{trace-harmonic-k} reads 
\begin{equation}\label{value}
k!\cdot \Phi_k(W_k-W_k^-)(\vphi)= 
k! \cdot 2N^{2k} \sum_{0<h<N/2 \atop   (h,N)=1} \frac{\vG\bigl(1-\tfrac{h}{N}\bigr)^4}{\vG\bigl(1-\tfrac{2h}{N}\bigr)^2} \ 
\F{\tfrac{h}{N}}{\tfrac{h}{N}}{1-\tfrac{2h}{N}}{1}{1} . 
\end{equation}

Let $f(N,k)$ denote the right-hand side of \eqref{value} and we compute its fractional part. 
Recall that $N\geq 4$ and $1 \leq k \leq g-2$ where $g=(N-1)(N-2)/2$. 
Here is the command for Mathematica: 
\medskip
\begin{quote}
\begin{verbatim}
f[n_,k_] := k!2n^(2k)Sum[If[GCD[n, h]==1,1,0]
      Gamma[1-h/n]^4/Gamma[1-2h/n]^2 
          HypergeometricPFQ[{h/n,h/n,1-2h/n},{1,1},1], 
               {h,1,IntegerPart[(n-1)/2]}]
N[FractionalPart[f[n,k]]]
\end{verbatim}
\end{quote}
\medskip
The author verified that $f(N,1)\not\in\Z$ for $N \leq 1000$ (see the table below for $N<100$) 
and that $f(N,k)\not\in\Z$ for $N \leq 8$ and any $k$, hence Theorem \ref{main-theorem} (ii) is proved.  

In view of Swinnerton-Dyer's conjecture (see Introduction), 
one may also fix $N$, $k$ and compute $m f(N,k)$ for various $m$ to obtain a lower bound of the order of $W_k-W_k^-$ modulo algebraic equivalence.  
For example, the author verified that $m f(5,1)\not\in \Z$  for $m \leq 10^5$. 

\begin{table}[h]
\caption{The fractional part of $f(N,1)$ for $N<100$}
\begin{center}
\begin{tabular}{r|l}\label{table}
$N$ & $f(N,1)$ \\
\hline
$4$&    $0.262996$ \\
$5$&    $0.537741$ \\
$6$&    $0.834938$ \\
$7$&    $0.0389723$ \\
$8$&    $0.486831$ \\
$9$&    $0.191617$ \\
$10$&   $0.0194112$ \\
$11$&   $0.714331$ \\
$12$&   $0.787413 $\\
$13$&   $0.339364 $\\
$14$&   $0.107307 $\\
$15$&   $0.964777 $\\
$16$&   $0.0707329$ \\
$17$&   $0.849791 $\\
$18$&   $0.8478 $\\
$19$&   $0.216837$\\
$20$&   $0.459979$\\
$21$&   $0.296951$\\
$22$&   $0.876098$\\
$23$&   $0.884882$\\
$24$&   $0.565879$\\
$25$&   $0.227588$\\
$26$&   $0.674037$\\
$27$&   $0.024742$\\
$28$&   $0.860369$\\
$29$&   $0.862392$\\
$30$&   $0.706843$\\
$31$&   $0.753471$\\
$32$&   $0.389462$\\
$33$&   $0.736648$\\
$34$&   $0.106166$\\
$35$&   $0.518381$
\end{tabular}
\qquad
\begin{tabular}{r|l}
$N$ & $f(N,1)$ \\
\hline
$36$&   $0.447655$\\
$37$&   $0.525754$\\
$38$&   $0.709018$\\
$39$&   $0.90578$\\
$40$&   $0.885897$\\
$41$&   $0.888106$\\
$42$&   $0.664142$\\
$43$&   $0.053105$\\
$44$&   $0.194837$\\
$45$&   $0.167823$\\
$46$&   $0.581124$\\
$47$&   $0.0668079$\\
$48$&   $0.0527443$\\
$49$&   $0.492313$\\
$50$&   $0.316991$\\
$51$&   $0.298819$\\
$52$&   $0.59749$\\
$53$&   $0.444978$\\
$54$&   $0.919842$\\
$55$&   $0.714357$\\
$56$&   $0.197632$\\
$57$&   $0.321665$\\
$58$&   $0.688486$\\
$59$&   $0.0898551$\\
$60$&   $0.687806$\\
$61$&   $0.832525$\\
$62$&   $0.301712$\\
$63$&   $0.02593$\\
$64$&   $0.920061$\\
$65$&   $0.706527$\\
$66$&   $0.0810429$\\
$67$&   $0.0490554$
\end{tabular}
\qquad
\begin{tabular}{r|l}
$N$ & $f(N,1)$ \\
\hline
$68$&   $0.718085$\\
$69$&   $0.964278$\\
$70$&   $0.103166$\\
$71$&   $0.449617$\\
$72$&   $0.544859$\\
$73$&   $0.356497$\\
$74$&   $0.505994$\\
$75$&   $0.232621$\\
$76$&   $0.992762$\\
$77$&   $0.581805$\\
$78$&   $0.102977$\\
$79$&   $0.822496$\\
$80$&   $0.517871$\\
$81$&   $0.960151$\\
$82$&   $0.0135158$\\
$83$&   $0.686773$\\
$84$&   $0.791853$\\
$85$&   $0.862785$\\
$86$&   $0.698527$\\
$87$&   $0.169399$\\
$88$&   $0.440793$\\
$89$&   $0.678576$\\
$90$&   $0.312135$\\
$91$&   $0.285791$\\
$92$&   $0.877431$\\
$93$&   $0.360037$\\
$94$&   $0.796999$\\
$95$&   $0.797337$\\
$96$&   $0.532044$\\
$97$&   $0.848835$\\
$98$&   $0.898728$\\
$99$&   $0.72628$
\end{tabular}
\end{center}
\end{table}

Finally, if we apply Proposition \ref{it-int-fermat} to Example \ref{klein}, then 
Corollary \ref{trace-harmonic-k} reads 
\begin{multline*}
k! \cdot \Tr \circ \Phi_k(W_k-W_k^-)(\vphi)\\
= k!\cdot 2\cdot 7^{2k} 
\left(
\G{\frac{3}{7},\frac{6}{7}}{\frac{2}{7}}^2
+\G{\frac{5}{7},\frac{6}{7}}{\frac{4}{7}}^2
+\G{\frac{3}{7},\frac{5}{7}}{\frac{1}{7}}^2
\right)
\F{\tfrac{1}{7}}{\tfrac{2}{7}}{\tfrac{4}{7}}{1}{1}. 
\end{multline*}
For the maximal $k=13$, the fractional part of the right-hand side is 
$0.96275\pm 10^{-5}$. 
This shows again that $W_k-W_k^-$ is not algebraically trivial for $N=7$ and any $k$. 


\end{document}